\documentclass[12pt]{amsart}

\usepackage{latexsym}
\usepackage{psfrag}
\usepackage{amsmath}
\usepackage{amssymb}
\usepackage{epsfig}
\usepackage{amsfonts}
\usepackage{amscd}
\usepackage{mathrsfs}
\usepackage{graphicx}
\usepackage{enumerate}
\usepackage[autostyle=false, style=english]{csquotes}
\MakeOuterQuote{"}
\usepackage{ragged2e}
\usepackage[all]{xy}
\usepackage{mathtools}
\newlength\ubwidth

\usepackage[dvipsnames]{xcolor}

\usepackage{placeins}
\usepackage{tikz}
\usetikzlibrary{shapes, positioning}
\usetikzlibrary{matrix,shapes.geometric,calc,backgrounds}

\oddsidemargin 0in
\textwidth 6.5in
\evensidemargin 0in
\topmargin -.4in
\textheight 9in



\parindent0pt
\parskip1.6ex



\newtheorem{theorem}{Theorem}
\newtheorem{lemma}[theorem]{Lemma}

\newtheorem{ex}[theorem]{Example}

\newtheorem{defn}[theorem]{Definition}

\usepackage{amsmath,amssymb,amsthm}
\usepackage[dvipsnames]{xcolor}
\usepackage[colorlinks=true,citecolor=RoyalBlue,linkcolor=red,breaklinks=true]{hyperref}

\usepackage{graphicx}
\usepackage{mathdots} 
\usepackage[margin=2.5cm]{geometry}
\usepackage{ytableau}
\newcommand*\circled[1]{\tikz[baseline=(char.base)]{
		\node[shape=circle,draw,inner sep=2pt] (char) {#1};}}

\newcommand{\mathcolorbox}[2]{%
	\colorbox{#1}{$\displaystyle#2$}}

\begin{document}

	\title[Construction of cylindric partitions with small profiles]
	{Combinatorial constructions of generating functions 
		of cylindric partitions with small profiles into unrestricted or distinct parts}
	
	\author[Kur\c{s}ung\"{o}z]{Ka\u{g}an Kur\c{s}ung\"{o}z}
	\address{Ka\u{g}an Kur\c{s}ung\"{o}z, Faculty of Engineering and Natural Sciences, 
		Sabanc{\i} University, Tuzla, Istanbul 34956, Turkey}
	\email{kursungoz@sabanciuniv.edu}
	
	\author[\"{O}mr\"{u}uzun Seyrek]{Hal\.{ı}me \"{O}mr\"{u}uzun Seyrek}
	\address{Hal\.{ı}me \"{O}mr\"{u}uzun Seyrek, Faculty of Engineering and Natural Sciences, 
		Sabanc{\i} University, Tuzla, Istanbul 34956, Turkey}
	\email{halimeomruuzun@alumni.sabanciuniv.edu}
	
	\subjclass[2010]{05A17, 05A15, 11P84}
	
	\keywords{integer partitions, cylindric partitions, partition generating function}
	
	\date{2022}
	
	\begin{abstract}
		In this paper, cylindric partitions into profiles $c=(1,1)$ and $c=(2,0)$ are considered.  
		The generating functions into unrestricted cylindric partitions 
		and cylindric partitions into distinct parts with these profiles are constructed.  
		The constructions are combinatorial and they connect 
		the cylindric partitions with ordinary partitions.  
	\end{abstract}
	
	\maketitle

	\section{Introduction}
	Cylindric partitions were introduced by Gessel and Krattenthaler \cite{GesselKrattenthaler}. 
	
	\begin{defn}\label{def:cylin} Let $k$ and $\ell$ be positive integers. Let $c=(c_1,c_2,\dots, c_k)$ be a composition, where $c_1+c_2+\dots+c_k=\ell$. A \emph{cylindric partition with profile $c$} is a vector partition $\Lambda = (\lambda^{(1)},\lambda^{(2)},\dots,\lambda^{(k)})$, where each $\lambda^{(i)} = \lambda^{(i)}_1+\lambda^{(i)}_2 + \cdots +\lambda^{(i)}_{s_i}$ is a partition, such that for all $i$ and $j$,
		$$\lambda^{(i)}_j\geq \lambda^{(i+1)}_{j+c_{i+1}} \quad \text{and} \quad \lambda^{(k)}_{j}\geq\lambda^{(1)}_{j+c_1}.$$
	\end{defn}
	
	For example, the sequence $\Lambda=((6,5,4,4),(8,8,5,3),(7,6,4,2))$
	is a cylindric partition with profile $(1,2,0)$. 
	One can check that for all $j$, $\lambda^{(1)}_j\ge \lambda^{(2)}_{j+2}$, $\lambda^{(2)}_j\ge \lambda^{(3)}_{j}$
	and $\lambda^{(3)}_j\ge \lambda^{(1)}_{j+1}$.
	We can visualize the required inequalities by writing the partitions 
	in subsequent rows repeating the first row below the last one, 
	and shifting the rows below as much as necessary to the left.  
	Thus, the inequalities become the weakly decreasing of the parts to the right in each row, 
	and downward in each column.  
	\[
	\begin{array}{ccc ccc ccc}
		& & & 6 & 5 & 4 & 4\\
		& 8 & 8 & 5 & 3 & \\
		& 7 & 6 & 4 & 2& \\
		\textcolor{lightgray}{6} & \textcolor{lightgray}{5} 
		& \textcolor{lightgray}{4} & \textcolor{lightgray}{4}	
	\end{array}
	\]
	The repeated first row is shown in gray.  
	
	The size $|\Lambda|$ of a cylindric partition $\Lambda = (\lambda^{(1)},\lambda^{(2)},\dots,\lambda^{(k)})$ is defined to be the sum of all the parts in the partitions $\lambda^{(1)},\lambda^{(2)},\dots,\lambda^{(k)}$. The largest part of a cylindric partition $\Lambda$ is defined to be the maximum part among all the partitions in $\Lambda$, and it is denoted by $\max(\Lambda)$. 
	The following generating function
	$$F_c(z,q):=\sum_{\Lambda\in \mathcal{P}_c} z^{\max{(\Lambda)}}q^{|\Lambda |}$$
	is the generating function for cylindric partitions, where $\mathcal{P}_c$ denotes the set of all cylindric partitions with profile $c$. 
	
	In 2007, Borodin \cite{Borodin} showed that when one sets $z=1$ to this generating function, it turns out to be a very nice infinite product. 
	
	\begin{theorem}[Borodin, 2007]	\label{theorem-Borodin}
		Let $k$ and $\ell$ be positive integers, and let $c=(c_1,c_2,\dots,c_k)$ be a composition of $\ell$. Define $t:=k+\ell$ and $s(i,j) := c_i+c_{i+1}+\dots+ c_j$. Then,
		\begin{equation} 
			\label{BorodinProd}
			F_c(1,q) = \frac{1}{(q^t;q^t)_\infty} \prod_{i=1}^k \prod_{j=i}^k \prod_{m=1}^{c_i} \frac{1}{(q^{m+j-i+s(i+1,j)};q^t)_\infty} \prod_{i=2}^k \prod_{j=2}^i \prod_{m=1}^{c_i} \frac{1}{(q^{t-m+j-i-s(j,i-1)};q^t)_\infty}.
		\end{equation}
	\end{theorem}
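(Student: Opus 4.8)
The approach I would take is Borodin's: realize cylindric partitions with a fixed profile $c$ as the configurations of a \emph{periodic Schur process}, and evaluate the resulting partition function by the transfer-matrix (half-vertex-operator) method on Fock space. The first step is to unfold the cylinder. Reading the profile as the cyclic word $w=0\,1^{c_1}\,0\,1^{c_2}\cdots 0\,1^{c_k}$ of length $t=k+\ell$, one checks that the inequalities of Definition~\ref{def:cylin} are equivalent to the existence of a cyclic chain of partitions $\mu^{[0]},\mu^{[1]},\dots,\mu^{[t]}=\mu^{[0]}$ indexed by the letters of $w$ in which consecutive partitions differ by a horizontal strip, the direction of the interlacing ($\prec$ or $\succ$) being prescribed by whether the intervening letter is a $1$ or a $0$; the $\lambda^{(i)}$ occupy the $0$-positions, and $|\Lambda|$ is recovered from the sizes of the $\mu^{[a]}$. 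In this form $\Lambda\mapsto(\mu^{[a]})_a$ is a bijection onto the configurations of a closed (periodic) Schur process, extending the familiar diagonal slicing of a single partition into a chain of interlacing partitions.

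The second step writes $F_c(1,q)$ as a trace over the bosonic (equivalently, fermionic) Fock space. Let $\Gamma_-(x)$ (resp.\ $\Gamma_+(x)$) be the half-vertex operator that adds (resp.\ removes) a horizontal strip, tracking the size of the strip by a power of $x$, let $L_0$ be the energy operator with $q^{L_0}|\mu\rangle=q^{|\mu|}|\mu\rangle$, and recall $q^{L_0}\Gamma_\pm(x)q^{-L_0}=\Gamma_\pm(q^{\pm1}x)$ and $\Gamma_+(x)\Gamma_-(y)=(1-xy)^{-1}\Gamma_-(y)\Gamma_+(x)$. The cyclic chain of Step~1 then turns the generating function into a trace of the shape
\[
F_c(1,q)\;=\;\operatorname{Tr}\!\bigl(q^{tL_0}\,\Gamma_{\epsilon_1}(x_1)\,\Gamma_{\epsilon_2}(x_2)\cdots\Gamma_{\epsilon_t}(x_t)\bigr),
\]
where $(\epsilon_a)\in\{+,-\}^t$ is the sign word attached to $w$ (with $\ell$ signs of one type and $k$ of the other) and each $x_a$ is a fixed monomial in $q$ chosen so that the grading on the right collects exactly $q^{|\Lambda|}$. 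Normal-ordering now finishes the calculation in principle: pushing every $\Gamma_+$ to the right past every $\Gamma_-$, each of the $k\ell$ pairs formed by one ``$+$''-step and one ``$-$''-step contributes a single factor $(1-q^{d})^{-1}$ with $d$ the cyclic distance along $w$ between the two steps; because the trace is cyclic, the $\Gamma_+$'s can be carried around the loop again and again, which replaces $d$ by $d,d+t,d+2t,\dots$ and so produces $(q^{d};q^{t})_\infty^{-1}$. Once all arguments have been scaled to $0$ the operators become the identity, and what is left is $\operatorname{Tr}(q^{tL_0})=(q^{t};q^{t})_\infty^{-1}$, the prefactor in \eqref{BorodinProd}.

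The third step identifies the cyclic distances $d$ with the exponents of \eqref{BorodinProd}. A ``$+$''-step and a ``$-$''-step are joined by two complementary arcs of the length-$t$ cycle, whose lengths are obtained by adding up the ``$0\,1^{c_r}$''-segments lying between them; this is exactly how the partial sums $s(i+1,j)=c_{i+1}+\dots+c_j$ and $s(j,i-1)$ enter. Labelling the two steps of a pair by $i,j$ (blocks) and $m$ (position within a block, $1\le m\le c_i$), one finds that one arc yields the exponent $m+j-i+s(i+1,j)$ over the range $1\le i\le j\le k$ and the complementary arc yields $t-m+j-i-s(j,i-1)$ over $2\le j\le i\le k$; reading off which $(i,j,m)$ actually occur from the positions of the $0$'s and $1$'s in $w$ reproduces the two triple products of the theorem, and the count $\sum_i(k-i+1)c_i+\sum_i(i-1)c_i=k\ell$ confirms that every pair is counted exactly once.

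I expect the third step, together with the choice of the monomials $x_a$ flagged in the second, to be the main obstacle. The vertex-operator normal-ordering is the standard periodic-Schur-process computation and produces a product of $q^{t}$-Pochhammer symbols almost automatically; the genuine work is purely combinatorial bookkeeping — translating ``distance along $w$'' into Borodin's closed forms while keeping the two arc families, the three nested index ranges, the off-by-one shifts inside $s(\cdot,\cdot)$, and the grading specialization all mutually consistent. (A second route to the same identity expands $F_c(1,q)$ over the cylindric Schur functions of Gessel--Krattenthaler and applies a cyclic Cauchy-type summation; its analytic heart is the same computation.)
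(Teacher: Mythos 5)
You should first note what the paper actually does with this statement: it does not prove Theorem~\ref{theorem-Borodin} at all, but imports it from Borodin's paper \cite{Borodin} and uses \eqref{BorodinProd} as a black box (e.g.\ to write down $F_{(1,1)}(1,q)$ and $F_{(2,0)}(1,q)$). So there is no internal proof to compare against; the relevant benchmark is Borodin's own argument, and your outline is essentially that argument: unfold the cylinder into a periodic chain of interlacing partitions (a periodic Schur process), write $F_c(1,q)$ as a trace of half vertex operators twisted by $q^{tL_0}$, and normal-order with $\Gamma_+(x)\Gamma_-(y)=(1-xy)^{-1}\Gamma_-(y)\Gamma_+(x)$ plus cyclicity of the trace, so each $(+,-)$ pair produces $(q^{d};q^{t})_\infty^{-1}$ and the leftover $\operatorname{Tr}(q^{tL_0})$ gives the prefactor $(q^{t};q^{t})_\infty^{-1}$.

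As a proof, however, what you have written is an outline whose load-bearing steps are exactly the ones you defer, so it cannot be accepted as it stands. First, the unfolding in your Step~1 is stated imprecisely: in the slicing that makes the Schur-process structure manifest, the chain $\mu^{[0]},\dots,\mu^{[t]}$ consists of the (shifted) column slices of the cylindric diagram, not of the rows $\lambda^{(i)}$ ``occupying the $0$-positions''; if instead you insist on a chain containing the $\lambda^{(i)}$ with $c_{i+1}$ interpolating partitions between consecutive rows, then the interlacing directions and, crucially, the specializations $x_a$ must be chosen so that the trace grades by $q^{|\Lambda|}$ and not by the sizes of the auxiliary partitions --- this is precisely the ``choice of the monomials $x_a$'' you flag but never make, and any error there shifts every exponent in \eqref{BorodinProd}. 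Second, the identification of the cyclic distances $d$ with $m+j-i+s(i+1,j)$ for $1\le i\le j\le k$ and $t-m+j-i-s(j,i-1)$ for $2\le j\le i\le k$ is asserted rather than derived; your count that the number of factors is $\sum_i(k-i+1)c_i+\sum_i(i-1)c_i=k\ell$ is a useful consistency check but does not determine the exponents, and those exponents are the entire content of the theorem. Third, a small but necessary point: the commutation relation is applied infinitely many times around the loop to turn $(1-q^{d})^{-1}$ into $(q^{d};q^{t})_\infty^{-1}$, which requires the usual absolute-convergence justification for $|q|<1$ before rearranging the trace. To turn your sketch into a proof you must carry out the unfolding (with the weighting) and the distance-to-exponent bookkeeping in full; otherwise the honest course is what the paper does, namely cite \cite{Borodin}.
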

	
	The identity (\refeq{BorodinProd}) is a very strong tool to find product representation of generating functions of cylindric partitions with a given profile explicitly.  
	
	Cylindric partitions have been studied intensively since their 
	introduction \cite{GesselKrattenthaler}.  
	Prominent examples are constructing Andrews-Gordon~\cite{Andrews-PNAS} type 
	evidently positive multiple series companions 
	to some cases in Borodin's theorem~\cite{CDU, CW, OW},  
	or even connections with theoretical physics~\cite{IKS}.

	The purpose of this paper is to construct generating functions 
	of cylindric partitions with small profiles into unrestricted or distinct parts. 
	In Section 2, we combinatorially reprove generating functions 
	for cylindric partitions with profiles $c=(1,1)$ and $c=(2,0)$. 
	The construction is based on the fact that if we have 
	a cylindric partition with profile $c=(1,1)$ or $c=(2,0)$, 
	then it can be decomposed into a pair of partitions $(\mu,\beta)$ 
	by a series of combinatorial moves. 
	The results in Section \ref{secGenFuncsUnrestricted} are limiting cases, 
	therefore corollaries, of~\cite[eq. (7.25)]{Warnaar}.  
	The proof techniques are different, though.  
	The approach in Section \ref{secGenFuncsUnrestricted} 
	seems to apply in~\cite[eq. (7.25)]{Warnaar}
	for $k = 1$ and $s=$ 1 or 2.
	In Section 3, we consider cylindric partitions with small profiles into distinct parts. 
	We construct generating functions for such partitions with profiles $c=(1,1)$ or $c=(2,0)$, 
	which turn out to be combinations of infinite products. We refer the reader to \cite{BU}, where cylindric partitions into distinct parts are also studied. 
	We conclude by constructing an evidently positive series generating function 
	for cylindric partitions with small profiles into odd parts.  
	
	\section{Generating Functions of Cylindric Partitions With Profiles \\ $c=(1,1)$ and $c=(2,0)$}
	\label{secGenFuncsUnrestricted}
	By using (\ref{BorodinProd}), one can easily show that 
	
	\begin{equation} \label{c=(1,1)}
		F_c(1,q) = \frac{(-q;q^2)_\infty}{(q;q)_\infty},
	\end{equation}
	where $c=(1,1)$.
	
	In the following theorem, we will give a combinatorial proof of identity (\ref{c=(1,1)}).
	
	\begin{theorem} 
		\label{Fc(1,q) when c=(1,1)}
		Let $c=(1,1)$. Then the generating function of cylindric partitions with profile $c$ is given by 
		
		\begin{equation*} 
			F_c(1,q) = \frac{(-q;q^2)_\infty}{(q;q)_\infty}.
		\end{equation*}
	\end{theorem}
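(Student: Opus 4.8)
The plan is to build an explicit size-preserving bijection between the cylindric partitions with profile $c=(1,1)$ and the pairs $(\mu,\beta)$ where $\mu$ is a partition into distinct odd parts and $\beta$ is an arbitrary partition. Since the generating function for such pairs is $(-q;q^2)_\infty\cdot\frac{1}{(q;q)_\infty}$, and $F_{(1,1)}(1,q)=\sum_\Lambda q^{|\Lambda|}$, this will give the stated identity at once; it can also be cross-checked against the product coming from Borodin's Theorem \ref{theorem-Borodin} (with $t=4$).

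First I would unwind Definition \ref{def:cylin} in this case: a cylindric partition with profile $(1,1)$ is a pair $(a,b)=(\lambda^{(1)},\lambda^{(2)})$ of ordinary partitions with $a_j\ge b_{j+1}$ and $b_j\ge a_{j+1}$ for all $j\ge 1$. Together with the within-row monotonicity $a_j\ge a_{j+1}$ and $b_j\ge b_{j+1}$, these say precisely that $\min(a_j,b_j)\ge\max(a_{j+1},b_{j+1})$ for every $j$. Hence the interleaved sequence $\pi:=\bigl(\max(a_1,b_1),\min(a_1,b_1),\max(a_2,b_2),\min(a_2,b_2),\dots\bigr)$, with trailing zeros deleted, is an honest partition, and the only data lost in passing from $(a,b)$ to $\pi$ is, for each $j$ with $a_j\ne b_j$, the record of which row held the larger entry. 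The inverse move reinstates this data: given $\pi$ and a set $S$ of indices at which we "swap'', one recovers $(a,b)$ and checks directly that the resulting pair satisfies all the inequalities. Writing $D(\pi):=\{\,i:\pi_{2i-1}>\pi_{2i}\,\}$ (with $\pi_k:=0$ beyond the length of $\pi$, so a lone trailing part also counts as such a descent — this trailing bit encodes which of $a,b$ is the longer, the two lengths differing by at most one because of $b_j\ge a_{j+1}$), we obtain a bijection between cylindric partitions with profile $(1,1)$ and pairs $(\pi,S)$ with $S\subseteq D(\pi)$. Concretely the "combinatorial moves'' are: scan $j=1,2,\dots$ and interchange the $j$-th entries of the two rows whenever $a_j<b_j$, recording each such $j$.

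Second, I would turn $(\pi,S)$ into $(\mu,\beta)$. Put $\delta_k:=\pi_k-\pi_{k+1}\ge 0$; then $|\pi|=\sum_k k\,\delta_k$ and $D(\pi)=\{\,i:\delta_{2i-1}>0\,\}$. Choosing $S\subseteq D(\pi)$ amounts to choosing, for some odd $k$ with $\delta_k\ge 1$, to "peel off'' a part of size $k$; let $\mu$ have exactly these $k$ as its parts (automatically distinct and odd) and let $\beta$ be the partition whose difference sequence is $\delta_k-[\,k\in\mu\,]$ (equal to $1$ when $k\in\mu$, else $0$). This is a bijection $\{(\pi,S)\}\leftrightarrow\{(\mu,\beta)\}$ with $|\pi|=|\mu|+|\beta|$, which finishes the argument.

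The step I expect to be the main obstacle is verifying that the $\min$–$\max$ reduction in the first paragraph is genuinely reversible, and in particular pinning down exactly which positions carry a free "swap'' bit — the delicate point being the trailing-zero / unequal-length case. If one prefers to avoid the second bijection, the same conclusion follows by evaluating the intermediate sum directly: parametrizing $\pi$ by its differences $\delta_k$,
\[
\sum_{\pi}2^{|D(\pi)|}q^{|\pi|}
=\prod_{k\text{ odd}}\Bigl(1+\tfrac{2q^{k}}{1-q^{k}}\Bigr)\prod_{k\text{ even}}\frac{1}{1-q^{k}}
=\prod_{k\text{ odd}}\frac{1+q^{k}}{1-q^{k}}\prod_{k\text{ even}}\frac{1}{1-q^{k}}
=\frac{(-q;q^{2})_\infty}{(q;q)_\infty}.
\]
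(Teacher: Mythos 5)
Your argument is correct, and it reaches the theorem by a leaner route than the paper. The paper processes the pairs $[b_j,a_j]$ from right to left, swapping whenever $a_j>b_j$ and subtracting $1$ from the first $2j-1$ interleaved entries while recording the part $2j-1$ in a distinct-odd partition, and it spends most of the proof verifying that each intermediate array is still a cylindric partition and that weight is conserved, then runs the inverse part-by-part on $\beta$. You instead sort each column to get an ordinary partition $\pi$ and observe that the only information lost is one free bit at each column with distinct entries, i.e.\ at each $i$ with $\pi_{2i-1}>\pi_{2i}$; your treatment of the odd-length/trailing-zero case (the bit there records which row is longer, the lengths differing by at most one) settles the one genuinely delicate point. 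This reduces the theorem to evaluating $\sum_\pi 2^{|D(\pi)|}q^{|\pi|}$, which factors at once over the difference coordinates $\delta_k$, giving the product; and your optional second bijection is in substance the paper's map in disguise: subtracting $1$ from $\delta_{2i-1}$ is subtracting $1$ from the first $2i-1$ interleaved entries, and since the paper's decrements never alter a comparison $a_j$ versus $b_j$ farther left, its swap set is likewise $\{j:a_j>b_j\}$ in the original, so your $(\mu,\beta)$ is the paper's $(\beta,\mu)$. What your packaging buys is brevity and an immediate product evaluation with no step-by-step inequality checks; what the paper's explicitly algorithmic form buys is that the same moves are reused afterwards for the $z$-refinement (the largest part grows by $1$ per part of the distinct-odd partition) and for the odd-parts variant in the final section --- refinements your parametrization also supports, since the largest part is $\pi_1=\sum_k\delta_k$, but which you would need to make explicit.
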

	
	\begin{proof}
		We will show that each cylindric partition $\lambda$ with profile $c=(1,1)$ corresponds 
		to a unique pair of partitions $(\mu,\beta)$, 
		where $\mu$ is an ordinary partition and $\beta$ is a partition with distinct odd parts. 
		Conversely, we will show that each pair of partitions $(\mu,\beta)$ will correspond 
		to a unique cylindric partition with profile $c=(1,1)$, 
		where $\mu$ is an ordinary partition and $\beta$ is a partition into distinct odd parts. 
		In this way, we will get the desired generating function for cylindric partitions with profile $c=(1,1)$.
		\begin{align}
			\nonumber
			F_c(1,q) = \sum_{\lambda} q^{\vert \lambda \vert}
			= \sum_{(\mu, \beta)} q^{\vert \mu \vert + \vert \beta \vert}
			= \left( \sum_{\mu} q^{\vert \mu \vert} \right) \, \left( \sum_{\beta} q^{\vert \beta \vert} \right)
			= \frac{1}{ (q; q)_\infty } \, (-q; q^2)_\infty, 
		\end{align}
		where $\lambda$, $\mu$, and $\beta$ are as described above. The first identity is the definition of $F_c(1,q)$.
		The second identity will be proven below.  
		The third follows from the fact that $\mu$ and $\beta$ are independent, 
		and the last one because unrestricted partitions and partitions into distinct odd parts 
		have the displayed infinite product generating functions~\cite{TheBlueBook}.

		Let  $\lambda$ be a cylindric partition  with profile $c=(1,1)$. Then $\lambda$ has the following form:
		
		
		\begin{align}
			\nonumber
			\begin{array}{ccc ccc ccc}
				& & a_1 & a_2 & a_3 & \ldots &a_{r-1} & a_r \\
				& b_1 & b_2 & b_3 & \ldots  & b_s \\ 
				\textcolor{lightgray}{a_1} & \textcolor{lightgray}{a_2} & \textcolor{lightgray}{a_3} 
				& \textcolor{lightgray}{\ldots} & \textcolor{lightgray}{a_r} & 
			\end{array}, 
		\end{align}
		where $r-1 \leq s \leq r+1$.  
		The last line is a repetition of the first one, 
		and the parts are weakly decreasing from left to right and downward.

		If we allow zeros at the end of partitions, we can take $s = r$.  
		Namely, if $s = r-1$, then we append $b_r = 0$; 
		and if $s = r+1$, then we both append $a_{r+1} = 0$ and update $r+1$ as $r$.  
		So, without loss of generality, our cylindric partition with profile $c=(1,1)$ looks like
		\begin{align}
			\nonumber
			\lambda = \begin{array}{ccc ccc ccc}
				& & a_1 & a_2 & a_3 & \ldots & a_{r-1} & a_r \\
				& b_1 & b_2 & b_3 & \ldots & b_{r-1} & b_r \\ 
				\textcolor{lightgray}{a_1} & \textcolor{lightgray}{a_2} & \textcolor{lightgray}{a_3} 
				& \textcolor{lightgray}{\ldots} & \textcolor{lightgray}{a_{r-1}} & \textcolor{lightgray}{a_r} & 
			\end{array}.  
		\end{align}
		At this point, only $a_r$ or $b_r$ may be zero, but not both.  
		Therefore either all parts or all parts but one in $\lambda$ are positive.  
		During the process of obtaining $\mu$ and $\beta$, 
		some or all parts of $\lambda$ may become zero.  
		It is possible that $\mu$ is a partition consisting entirely of zeros, 
		i.e., the empty partition.  
		But that does not create a problem because $r$ is determined at the beginning, and it is fixed.  
		
		Our goal is to transform $\lambda$ into another cylindric partition $\widetilde{\lambda}$ of the same profile 
		\begin{align}
			\label{cylPtnLambdaTilde}
			\widetilde{\lambda} = \begin{array}{ccc ccc ccc}
				& & \widetilde{a}_1 & \widetilde{a}_2 & \widetilde{a}_3 & \ldots & \widetilde{a}_{r-1} & \widetilde{a}_r \\
				& \widetilde{b}_1 & \widetilde{b}_2 & \widetilde{b}_3 & \ldots & \widetilde{b}_{r-1} & \widetilde{b}_r \\ 
				\textcolor{lightgray}{\widetilde{a}_1} & \textcolor{lightgray}{\widetilde{a}_2} & \textcolor{lightgray}{\widetilde{a}_3} 
				& \textcolor{lightgray}{\ldots} & \textcolor{lightgray}{\widetilde{a}_{r-1}} & \textcolor{lightgray}{\widetilde{a}_r} & 
			\end{array}
		\end{align}
		with the additional property that $\widetilde{b}_j \geq \widetilde{a}_j$ for all $j = 1, 2, \ldots, r$, 
		allowing zeros at the end.  
		Then, parts of $\widetilde{\lambda}$ can be listed as
		\begin{align}
			\nonumber
			\mu = ( \widetilde{b}_1, \widetilde{a}_1, \widetilde{b}_2, \widetilde{a}_2, \ldots, 
			\widetilde{b}_r, \widetilde{a}_r )
		\end{align}
		to obtain the promised unrestricted partition $\mu$.  
		The remaining inequalities $ \widetilde{a}_j \geq \widetilde{b}_{j+1} $ for $j = 1, 2, \ldots, (r-1)$ 
		are ensured by the fact that $\widetilde{\lambda}$ is a cylindric partition with profile $c=(1,1)$.  
		
		We will do this by a series of transformations on $\lambda$ 
		which will be recorded as a partition $\beta$ into distinct odd parts.  
		We will then argue that $ \vert \lambda \vert = \vert \mu \vert + \vert \beta \vert$.
		
		We read the parts of the cylindric partition 
		
		\begin{align}
			\nonumber
			\lambda =	\begin{array}{ccc ccc ccc}
				& & a_1 & a_2 & a_3 & \ldots & a_{r-1} & a_r \\
				& b_1 & b_2 & b_3 & \ldots & b_{r-1} & b_r \\ 
				\textcolor{lightgray}{a_1} & \textcolor{lightgray}{a_2} & \textcolor{lightgray}{a_3} 
				& \textcolor{lightgray}{\ldots} & \textcolor{lightgray}{a_{r-1}} & \textcolor{lightgray}{a_r} & 
			\end{array}  
		\end{align}
		as the pairs: $[b_1,a_1], [b_2,a_2], [b_3,a_3], \ldots, [b_r,a_r]$. We start with the rightmost pair $[b_r,a_r]$. If $b_r \geq a_r$, there's nothing to do.  
		We simply set $\widetilde{b}_r = b_r$, $\widetilde{a}_r = a_r$, and do not add any parts to $\beta$ yet.  
		
		If $b_r < a_r$, then we 
		\begin{itemize}
			\item switch places of $a_r$ and $b_r$, 
			\item subtract 1 from each of the parts $a_1$, $a_2$, \ldots, $a_r$, $b_1$, $b_2$, \ldots $b_{r-1}$, 
			\item set $\widetilde{b}_r = a_{r}-1$ and $\widetilde{a}_r = b_r$, 
			\item add the part $(2r-1)$ to $\beta$.  
		\end{itemize}
		We need to perform several checks here.  
		First, we will show that at each of the steps listed above, 
		the intermediate cylindric partition satisfies the weakly decreasing condition
		across rows and down columns.  
		The affected parts are highlighted.  
		\begin{align}
			\nonumber
			\begin{array}{ccc ccc ccc}
				& & a_1 & a_2 & a_3 & \ldots & \mathcolorbox{yellow!50}{a_{r-1}} & \mathcolorbox{yellow!50}{a_r} \\
				& b_1 & b_2 & b_3 & \ldots & \mathcolorbox{yellow!50}{b_{r-1}} & \mathcolorbox{yellow!50}{b_r} \\ 
				\textcolor{lightgray}{a_1} & \textcolor{lightgray}{a_2} & \textcolor{lightgray}{a_3} 
				& \textcolor{lightgray}{\ldots} 
				& \mathcolorbox{yellow!25}{\textcolor{lightgray}{a_{r-1}}} 
				& \mathcolorbox{yellow!25}{\textcolor{lightgray}{a_r}} & 
			\end{array}
		\end{align}
		\begin{align*}
			\Bigg\downarrow \textrm{ after switching places of } a_r \textrm{ and } b_r
		\end{align*}
		\begin{align}
			\nonumber
			\begin{array}{ccc ccc ccc}
				& & a_1 & a_2 & a_3 & \ldots & \mathcolorbox{yellow!50}{a_{r-1}} & \mathcolorbox{yellow!50}{b_r} \\
				& b_1 & b_2 & b_3 & \ldots & \mathcolorbox{yellow!50}{b_{r-1}} & \mathcolorbox{yellow!50}{a_r} \\ 
				\textcolor{lightgray}{a_1} & \textcolor{lightgray}{a_2} & \textcolor{lightgray}{a_3} 
				& \textcolor{lightgray}{\ldots} 
				& \mathcolorbox{yellow!25}{\textcolor{lightgray}{a_{r-1}}} 
				& \mathcolorbox{yellow!25}{\textcolor{lightgray}{b_r}} & 
			\end{array}
		\end{align}
		The inequalities $a_{r-1} \geq b_r$ and $a_{r-1} \geq a_r$ carry over from the original cylindric partition.  
		The inequalities $b_{r-1} \geq a_r$ and $b_{r-1} \geq b_r$
		are also two of the inequalities implied by the original cylindric partition.  
		All other inequalities are untouched.  
		At this point, we have not altered the weight of the cylindric partition yet.  
		\begin{align*}
			\Bigg\downarrow \textrm{ after subtracting 1 from the listed parts }
		\end{align*}
		\begin{align}
			\nonumber
			\begin{array}{ccc ccc ccc}
				& & (a_1 - 1) & (a_2 - 1) & (a_3 - 1) & \ldots & \mathcolorbox{yellow!50}{(a_{r-1} - 1)} & \mathcolorbox{yellow!50}{b_r} \\
				& (b_1 - 1) & (b_2 - 1) & (b_3 - 1) & \ldots & \mathcolorbox{yellow!50}{(b_{r-1} - 1)} 
				& \mathcolorbox{yellow!50}{(a_r - 1)} \\ 
				\textcolor{lightgray}{(a_1 - 1)} & \textcolor{lightgray}{(a_2 - 1)} 
				& \textcolor{lightgray}{(a_3 - 1)} 
				& \textcolor{lightgray}{\ldots} 
				& \mathcolorbox{yellow!25}{\textcolor{lightgray}{(a_{r-1} - 1)}} 
				& \mathcolorbox{yellow!25}{\textcolor{lightgray}{b_r}} & 
			\end{array}
		\end{align}
		We argue that this is still a valid cylindric partition.  
		The only inequalities that need to be verified are 
		$a_{r-1} - 1 \geq b_r$ and $b_{r-1} - 1 \geq b_r$.  
		Because of the original cylindric partition, we have 
		$a_{r-1} \geq a_r$ and $b_{r-1} \geq a_r$.  
		Because of the case we are examining $a_r > b_r$, 
		so that $a_r - 1 \geq b_r$, both being integers.  
		Combining $a_{r-1} - 1 \geq a_r -1$, $b_{r-1} - 1 \geq a_r - 1$ and $a_r - 1 \geq b_r$ 
		yield the desired inequalities.  
		\begin{align*}
			\Bigg\downarrow \textrm{ after relabeling }
		\end{align*}
		\begin{align}
			\nonumber
			\begin{array}{ccc ccc ccc}
				& & (a_1 - 1) & (a_2 - 1) & (a_3 - 1) & \ldots & {(a_{r-1} - 1)} & {\widetilde{a}_r} \\
				& (b_1 - 1) & (b_2 - 1) & (b_3 - 1) & \ldots & {(b_{r-1} - 1)} 
				& {\widetilde{b}_r} \\ 
				\textcolor{lightgray}{(a_1 - 1)} & \textcolor{lightgray}{(a_2 - 1)} & \textcolor{lightgray}{(a_3 - 1)} 
				& \textcolor{lightgray}{\ldots} & \textcolor{lightgray}{(a_{r-1} - 1)} 
				& {\textcolor{lightgray}{\widetilde{a}_r}} & 
			\end{array}
		\end{align}
		Now we have ${\widetilde{b}_r} \geq {\widetilde{a}_r}$ since $a_r - 1 \geq b_r$.  
		Also, we subtracted 1 from exactly $2r-1$ parts.  
		We add this $(2r-1)$ as a part in $\beta$.  
		At the beginning, $\beta$ was the empty partition, 
		so it is a partition into distinct odd parts both before and after this transformation.  
		The sum of the weight of $\beta$ and the weight of the cylindric partition remains constant.  
		It is possible that either or both $\widetilde{a}_r$ and $\widetilde{b}_r$ may be zero, 
		along with some other parts.  
		For example, in the extreme case that $a_1 = a_2 = \cdots = a_r = 1$, 
		$b_1 = b_2 = \cdots = b_{r-1} = 1$ and $b_r = 0$, 
		the cylindric partition becomes the empty partition after the transformation we illustrated.  
		
		We should mention that after this point 
		there is no harm in renaming $(a_i - 1)$'s $a_i$'s and $(b_i - 1)$'s $b_i$'s, where applicable.  
		This will lead to the cleaner exposition down below.  
		There is no loss of information, since the subtracted 1's are recorded 
		as a part in $\beta$ already.  
		
		Then, we repeat the following process for $j = (r-1), (r-2), \ldots, 2, 1$ in the given order.  
		At the beginning of the $j$th step, we have the intermediate cylindric partition 
		\begin{align}
			\nonumber
			\begin{array}{ccc ccc ccc c}
				& & a_1 & a_2 & \cdots & a_{j-1} & a_j & \widetilde{a}_{j+1} & \cdots & \widetilde{a}_r \\
				& b_1 & b_2 & \cdots & b_{j-1} & b_j & \widetilde{b}_{j+1} & \cdots & \widetilde{b}_r & \\ 
				\textcolor{lightgray}{a_1} & \textcolor{lightgray}{a_2} & \textcolor{lightgray}{\cdots} & 
				\textcolor{lightgray}{a_{j-1}} & \textcolor{lightgray}{a_j} & \textcolor{lightgray}{\widetilde{a}_{j+1}} & 
				\textcolor{lightgray}{\cdots} & \textcolor{lightgray}{\widetilde{a}_r} & & 
			\end{array}.  
		\end{align}
		The parts weakly decrease from left to right and downward, 
		and the third line is a repetition of the first one.  
		This intermediate cylindric partition satisfies the additional inequalities 
		\begin{align}
			\nonumber
			\widetilde{b}_{j+1} \geq \widetilde{a}_{j+1}, \quad 
			\widetilde{b}_{j+2} \geq \widetilde{a}_{j+2}, \quad 
			\cdots \quad 
			\widetilde{b}_{r} \geq \widetilde{a}_{r}.  
		\end{align}
		Some or all parts in this intermediate partition may be zero.  
		
		We focus on the $j$th pair $[b_j, a_j]$.  
		If $b_j \geq a_j$ already, then we do not alter either the intermediate cylindric partition 
		or the partition $\beta$ into distinct odd parts.  
		We just relabel $b_j$ as $\widetilde{b}_j$, $a_j$ as $\widetilde{a}_j$, 
		and move on to the $(j-1)$th pair.  
		
		In the other case $a_j > b_j$, we
		\begin{itemize}
			\item switch places of $a_j$ and $b_j$, 
			\item subtract 1 from each of the parts $a_1$, $a_2$, \ldots, $a_j$, $b_1$, $b_2$, \ldots $b_{j-1}$, 
			\item set $\widetilde{b}_j = a_{j}-1$ and $\widetilde{a}_j = b_j$, 
			\item add the part $(2j-1)$ to $\beta$.  
		\end{itemize}
		We again perform several checks as in the $r$th case, but this time there are inequalities 
		that involve parts that lie to the right of $a_j$ and $b_j$.  
		We first show that the listed operations do not violate the 
		weakly decreasing condition on the cylindric partition across rows and down columns.  
		The affected parts are highlighted.  
        We switch the places of $a_j$ and $b_j$ to obtain
		\begin{align}
			\nonumber
			\begin{array}{ccc ccc ccc c}
				& & a_1 & a_2 & \cdots & \mathcolorbox{yellow!50}{a_{j-1}} & 
				\mathcolorbox{yellow!50}{b_j} & \mathcolorbox{yellow!50}{\widetilde{a}_{j+1}} & \cdots & \widetilde{a}_r \\
				& b_1 & b_2 & \cdots & \mathcolorbox{yellow!50}{b_{j-1}} & \mathcolorbox{yellow!50}{a_j} & 
				\mathcolorbox{yellow!50}{\widetilde{b}_{j+1}} & \cdots & \widetilde{b}_r & \\ 
				\textcolor{lightgray}{a_1} & \textcolor{lightgray}{a_2} & \textcolor{lightgray}{\cdots} & 
				\mathcolorbox{yellow!25}{\textcolor{lightgray}{a_{j-1}}} & 
				\mathcolorbox{yellow!25}{\textcolor{lightgray}{b_j}} & 
				\mathcolorbox{yellow!25}{\textcolor{lightgray}{\widetilde{a}_{j+1}}} & 
				\textcolor{lightgray}{\cdots} & \textcolor{lightgray}{\widetilde{a}_r} & & 
			\end{array}.  
		\end{align}
		Each of the required inequalities 
		\begin{align}
			\nonumber 
			a_{j-1} \geq b_j \geq \widetilde{a}_{j+1}, \quad 
			b_{j-1} \geq a_j \geq \widetilde{b}_{j+1}, \quad 
			b_{j-1} \geq b_j, \quad 
			a_{j-1} \geq a_j \geq \widetilde{a}_{j+1}, \quad 
			\textrm{ and } \quad 
			b_{j} \geq \widetilde{b}_{j+1}
		\end{align}
		are already implied in the cylindric partition before the change.  
		The inequalities between the non-highlighted parts carry over.  
        We then subtract one from each of the listed parts.  
		The inequalities we need to verify are 
		\begin{align}
			\nonumber 
			a_{j-1} - 1 \geq b_j, \quad 
			a_j - 1 \geq \widetilde{b}_{j+1}, \quad 
			b_{j-1}-1 \geq b_j, \quad 
			\textrm{ and } \quad 
			a_j - 1 \geq \widetilde{a}_{j+1}.  
		\end{align}
		By the cylindric partition two steps ago, we have 
		\begin{align}
			\nonumber 
			a_{j-1} \geq a_j, \quad 
			b_j \geq \widetilde{b}_{j+1}, \quad 
			b_{j-1} \geq a_j, 
			\textrm{ and } \quad 
			b_j \geq \widetilde{a}_{j+1}.  
		\end{align}
		By the hypothesis, $a_j > b_j$, so $a_j-1 \geq b_j$.  
		This last inequality, combined with the last four displayed inequalities 
		yield the inequalities we wanted.  
        Then we relabel $b_j$ as $\widetilde{a}_j$ 
        and $a_j$ as $\widetilde{b}_j$ in their respective new places.  
		We have $\widetilde{b}_j \geq \widetilde{a}_j$, since $a_j-1 \geq b_j$.  
		
		On the other hand, we subtracted a total of $(2j-1)$ 1's from the parts of the intermediate cylindric partition, 
		and now we add $(2j-1)$ to $\beta$.  
		$\beta$ still has distinct odd parts, because the smallest part we had added to $\beta$ must be $\geq (2j+1)$ 
		in the previous step.  It is also possible that $\beta$ was empty before adding $(2j-1)$.  
		We should note that $(2j-1)$ is the smallest part in $\beta$ at the moment.  
		In any case, we have 
		\begin{align}
			\nonumber 
			\vert \lambda \vert = \vert \beta \vert + 
			\textrm{the weight of the intermediate cylindric partition}.  
		\end{align}
		$\lambda$ is the original cylindric partition, before any changes.  
		
		Like after the $r$th step, there is no danger in renaming $(a_i - 1)$'s $a_i$'s and $(b_i - 1)$'s $b_i$'s, 
		where necessary.  
		
		Once this process is finished, we have the cylindric partition $\widetilde{\lambda}$ 
		as given in \eqref{cylPtnLambdaTilde}, 
		The nonzero parts of $\widetilde{\lambda}$ is listed as parts 
		of the unrestricted partition $\mu$, 
		the alterations in obtaining $\widetilde{\lambda}$ are recorded 
		as parts of the partition $\beta$ into distinct odd parts, 
		and one direction of the proof is over.

		Next; given $(\mu, \beta)$, 
		where $\mu$ is an unrestricted partition, 
		and $\beta$ is a partition into distinct odd parts, 
		we will produce a unique cylindric partition $\lambda$ with profile $c=(1,1)$ 
		such that 
		\begin{align}
			\nonumber 
			\vert \lambda \vert = \vert \mu \vert + \vert \beta \vert.  
		\end{align}
		The parts of $\mu$ in their respective order are relabeled as: 
		\begin{align}
			\nonumber
			\mu = & \mu_1 + \mu_2 + \cdots + \mu_l \\
			= & \widetilde{b}_1 + \widetilde{a}_1 + \cdots + \widetilde{b}_s + \widetilde{a}_s.  
		\end{align}
		The relabeling requires an even number of parts, 
		which can be solved by appending a zero at the end of $\mu$ if necessary.  
		Then, the $\widetilde{b}$'s and $\widetilde{a}$'s are arranged as 
		the cylindric partition
		\begin{align}
			\nonumber
			\widetilde{\lambda}
			= \begin{array}{ccc ccc}
				& & \widetilde{a}_1 & \widetilde{a}_2 & \cdots & \widetilde{a}_s \\ 
				& \widetilde{b}_1 & \widetilde{b}_2 & \cdots & \widetilde{b}_s & \\
				\textcolor{lightgray}{\widetilde{a}_1} & \textcolor{lightgray}{\widetilde{a}_2} & 
				\textcolor{lightgray}{\cdots} & \textcolor{lightgray}{\widetilde{a}_s} & & 
			\end{array}.  
		\end{align}
		All of the required inequalities 
		$\widetilde{a}_j \geq \widetilde{a}_{j+1}$, $\widetilde{b}_j \geq \widetilde{b}_{j+1}$, 
		$\widetilde{a}_j \geq \widetilde{b}_{j+1}$, and $\widetilde{b}_j \geq \widetilde{a}_{j+1}$
		for $j = 1, 2, \ldots, s-1$ are implied by the inequalities between parts of $\mu$.  
		$\widetilde{\lambda}$ has the additional property that 
		$\widetilde{b}_j \geq \widetilde{a}_j$ for $j = 1, 2, \ldots, s$.  
		This is the $\widetilde{\lambda}$ we obtained in the first half of the proof, 
		except for the possibly different number of zeros at the end(s).  
		The positive parts and their positions are the same.  
		
		For the smallest part $(2j-1)$ in $\beta$, we do the following.  
		\begin{itemize}
			\item delete the part from $\beta$, 
			\item add 1 to all parts $a_1, \ldots, \widetilde{a}_{j-1}, b_1, \ldots, \widetilde{b}_j$, 
			\item switch places of $\widetilde{a}_j$ and $(\widetilde{b}_j + 1)$, 
			\item rename $(a_1 + 1)$, \ldots, $(\widetilde{a}_{j-1} + 1)$, $(\widetilde{b}_j + 1)$, 
			$(b_1 + 1)$, \ldots, $(\widetilde{b}_{j-1} + 1)$, $\widetilde{a_j}$, in their respective order 
			as $a_1$, \ldots, $a_{j-1}$, $a_j$, $b_1$, \ldots, $b_{j-1}$, $b_j$.  
		\end{itemize}
		We repeat this procedure until $\beta$ becomes the empty partition, 
		at which time $\widetilde{\lambda}$ has evolved into $\lambda$, 
		the cylindric partition with profile $c=(1,1)$ we have been aiming at.  
		
		There are a few details to clarify, including the notation.  
		We start by verifying that the inequalities required by the cylindric partition 
		are satisfied at each step.  The affected parts are highlighted.  
		We start with the cylindric partition just before the transformations.  
        We add one to each of the listed parts.  
		The required inequalities are naturally satisfied here, 
		because the parts which are supposed to be weakly greater are increased.  
        Then, we switch places of $\widetilde{a}_j$ and $\widetilde{b}_j + 1$.  
		\begin{align}
			\nonumber 
			\begin{array}{ccc ccc ccc}
				& & (a_1 + 1) & \cdots & \mathcolorbox{yellow!50}{ (\widetilde{a}_{j-1} + 1) } & 
				\mathcolorbox{yellow!50}{ (\widetilde{b}_j + 1) } & 
				\mathcolorbox{yellow!50}{\widetilde{a}_{j+1}} & 
				\ldots & \widetilde{a}_s \\
				& (b_1 + 1) & \cdots & \mathcolorbox{yellow!50}{ (\widetilde{b}_{j-1} + 1) } & 
				\mathcolorbox{yellow!50}{ \widetilde{a}_j } & 
				\mathcolorbox{yellow!50}{\widetilde{b}_{j+1}} & 
				\ldots & \widetilde{b}_s & \\
				\textcolor{lightgray}{ (a_1 + 1) } & \textcolor{lightgray}{\cdots} & 
				\mathcolorbox{yellow!25}{ \textcolor{lightgray}{ (\widetilde{a}_{j-1} + 1) } } & 
				\mathcolorbox{yellow!25}{ \textcolor{lightgray}{ (\widetilde{b}_j + 1) } } & 
				\mathcolorbox{yellow!25}{ \textcolor{lightgray}{\widetilde{a}_{j+1}} } & 
				\ldots & \widetilde{a}_s & & 
			\end{array}
		\end{align}
		Again, the required inequalities are implied by the cylindric partition in the previous step.  
		At the beginning of the first run, we do not have $a_1$ or $b_1$ in the cylindric partition, 
		but rather $\widetilde{a}_1$ or $\widetilde{b}_1$, respectively.  
		However, at the end of each run, the leftmost so many parts in the first and the second 
		rows of the cylindric partition are labeled $a_1$, $b_1$, etc.  
		
		Because we deleted $(2j-1)$ from $\beta$, and we added 1 to exactly $(2j-1)$ of 
		the parts in the intermediate cylindric partition, 
		the sum of weights of $\beta$ and of the intermediate cylindric partition remains constant.  
		It equals the sum of weights of $\mu$ and the original $\beta$.  
		
		The relabeling of $(a_1 + 1)$ as $a_1$ etc. does not interfere 
		with any of the operations before it, 
		and certainly not any of the possible operations that some after it; 
		therefore, it should not cause any confusion.  
		
		We tacitly assumed that $j < s$ in the displayed cylindric partition above.  
		This does not have to be the case, as $\beta$ may have a part 
		greater than the length of $\mu$.  
		The remedy is to append zeros, and increase $s$ as much as necessary.  
		This takes care of the extreme case of $\widetilde{\lambda}$ 
		being the empty partition.  
		All of the arguments above apply for non-negative parts as well as strictly positive parts.  
		We also implicitly assumed that $\beta$ is nonempty to start with.  
		If $\beta$ is the empty partition, 
		we do not need the perform any operations $\widetilde{\lambda}$ at all.  
		We simply call $\widetilde{a}_j$'s $a_j$'s, and $\widetilde{b}_j$'s $b_j$'s.  
		
		Once all parts of $\beta$ are exhausted, 
		we clear the trailing pairs of zeros in the cylindric partition at hand, 
		and we declare the obtained cylindric partition $\lambda$.  
		
		Because the sum of the weights of $\beta$ and of the intermediate cylindric partition 
		remained constant at each step of the transformation 
		and $\beta$ is the empty partition at the end, we have 
		\begin{align}
			\nonumber 
			\vert \lambda \vert = \vert \mu \vert + \vert \textrm{(the original)} \beta \vert.  
		\end{align}
		Except for the relabelings, 
		the adding or subtracting 1's, 
		and adding or deleting parts of $\beta$ 
		are done in exact reverse order, 
		and they are clearly inverse operations of each other, 
		the process is reversible, 
		and the collection of profile $c=(1,1)$ cylindric partitions $\lambda$'s 
		are in one-to-one correspondence with the pairs $(\mu, \beta)$ 
		of an unrestricted partition and a partition into distinct odd parts.  
		The relabelings in the two phases of the proof
		are consistent at the beginning and at the end of the transformation, 
		and between the rounds of operations.  
		This concludes the proof.  
	\end{proof}
	
	The following example demonstrates how we construct the pair of partitions $(\mu, \beta)$ if we are given a cylindric partition $\lambda$ with profile  $c=(1,1)$.  
	\begin{ex} \normalfont
		Let $\lambda$ be the following cylindric partition with profile $c=(1,1)$:
		
		\[
		\begin{array}{ccc ccc ccc}
			& & & 7 & 4 & 4 & 3 &  \\
			& & 6 & 5 & 4 & 
		\end{array}
		\]
		
		We read the parts of $\lambda$ as pairs: $[6,7], [5,4], [4,4]$ and $[0,3]$.
		
		\[
		\begin{array}{ccc ccc ccc}
			& & & 7 & 4 & 4 & 3 &  \\
			& & 6 & 5 & 4 & 0
		\end{array}
		\]
		We now start to perform the moves defined in the proof of Theorem \ref{Fc(1,q) when c=(1,1)}. We first change the places of $0$ and $3$ in the rightmost pair and we get the following intermediate partition:
		
		\[
		\begin{array}{ccc ccc ccc}
			& & & 7 & 4 & 4 & 0 &  \\
			& & 6 & 5 & 4 & \circled{3}
		\end{array}
		\]
		\begin{center}
			$\Big\downarrow \scriptsize\parbox{7cm}{subtract $1$ from circled 3 and  the parts take place above  and on the left of it}$
		\end{center}
		
		\[
		\begin{array}{ccc ccc ccc}
			& & & 6 & 3 & 3 & 0 \\
			& & 5 & 4 & 3 & 2 
		\end{array}
		\]
		
		We changed the total weight by $7$, so we have $\beta_1=7$. We do not touch to the pairs $[3,3]$ and $[4,3]$ since $3 \geq 3$ and $4 \geq 3$. We now correct the places of $6$ and $5$, then we perform the last possible move:
		
		\[
		\begin{array}{ccc ccc ccc}
			& & & 5 & 3 & 3 & 0 \\
			& & \circled{6} & 4 & 3 & 2 
		\end{array}
		\]

		\hspace{70mm}	$\Big\downarrow \scriptsize\parbox{7cm}{subtract $1$ from circled 6 }$
		
		\[
		\begin{array}{ccc ccc ccc}
			& & & 5 & 3 & 3 & 0 \\
			& & 5 & 4 & 3 & 2 
		\end{array}
		\]
		
		We changed the total weight by $1$, so we have $\beta_2=1$. Therefore, we decomposed $\lambda$ into the pair of partitions $(\mu, \beta)$, where $\beta=7+1$ and $\mu=5+5+4+3+3+3+2$.
	\end{ex}
	
	The following example demonstrates how we construct a unique cylindric partition $\lambda$ with profile $c=(1,1)$, if we are given a pair of partitions 
	$(\mu,\beta)$ which is described as in the proof of Theorem \ref{Fc(1,q) when c=(1,1)}.
	
	\begin{ex}\normalfont
		Let $\mu=6+5+5+3+1$ and $\beta=9+7+3$. We read the parts of $\mu$ as follows:
		\[
		\begin{array}{ccc ccc ccc}
			& & & 5 & 3 & 0 & 0 & 0 &  \\
			& & 6 & 5 & 1 & 0 & 0 & 
		\end{array}
		\]
		The first part of $\beta$ is $9$. Since we want to increase the weight by $9$, we add $0$'s as many as we need when we construct the pairs.
		
		\[
		\begin{array}{ccc ccc ccc}
			& & & 5 & 3 & 0 & 0 & 0 &  \\
			& & 6 & 5 & 1 & 0 & \circled{0} & 
		\end{array}
		\]
		\hspace{70mm}	$\Big\downarrow \scriptsize\parbox{7cm}{increase by $1$ circled 0 and all corresponding parts }$
		\[
		\begin{array}{ccc ccc ccc}
			& & & 6 & 4 & 1 & 1 & 0 &  \\
			& & 7 & 6 & 2 & 1& 1 & 
		\end{array}
		\]
		\hspace{70mm}	$\Big\downarrow \scriptsize\parbox{7cm}{correct the places of parts in the last pair }$
		\[
		\begin{array}{ccc ccc ccc}
			& & & 6 & 4 & 1 & 1 & 1 &  \\
			& & 7 & 6 & 2 & \circled{1}& 0 & 
		\end{array}
		\]
		
		\hspace{70mm}	$\Big\downarrow \scriptsize\parbox{7cm}{increase by $1$ circled 0 and all corresponding parts  }$
		\[
		\begin{array}{ccc ccc ccc}
			& & & 7 & 5 & 2 & 1 & 1 &  \\
			& & 8 & 7 & 3 & 2& 0 & 
		\end{array}
		\]
		\hspace{70mm}	$\Big\downarrow \scriptsize\parbox{7cm}{correct the places of parts in the second pair from the right }$
		
		\[
		\begin{array}{ccc ccc ccc}
			& & & 7 & 5 & 2 & 2 & 1 &  \\
			& & 8 & \circled{7} & 3 & 1& 0 & 
		\end{array}
		\]
		\hspace{70mm}	$\Big\downarrow \scriptsize\parbox{7cm}{increase by $1$ circled 7 and all corresponding parts  }$
		\[
		\begin{array}{ccc ccc ccc}
			& & & 8 & 5 & 2 & 2 & 1 &  \\
			& &9 & 8 &  3 & 1& 0 & 
		\end{array}
		\]
		\hspace{70mm}	$\Big\downarrow \scriptsize\parbox{7cm}{correct the places of parts in the fourth pair from the right }$
		\[
		\begin{array}{ccc ccc ccc}
			\lambda=	& & & 8 & 8 & 2 & 2 & 1 &  \\
			& &9 & 5 &  3 & 1& 0 & 
		\end{array}
		\]
		$\lambda$ is the unique cylindric partition with profile $c=(1,1)$ corresponding to the pair of partitions $(\mu,\beta)$.
	\end{ex}
	
	\begin{theorem} 
		Let $c=(1,1)$. Then the generating function of cylindric partitions with profile $c$ is given by 
		
		\begin{equation*} 
			F_c(z,q) = \frac{(-zq;q^2)_\infty}{(zq;q)_\infty}.
		\end{equation*}
		where the exponent of variable $z$ keeps track of the largest part of the cylindric partitions.
		
		\begin{proof}
			In the proof of Theorem \ref{Fc(1,q) when c=(1,1)}, we show that there is a one-to-one correspondence between the cylindric partitions with profile $c=(1,1)$ and the pairs of partitions $(\mu,\beta)$ such that $\mu$ is an ordinary partition and $\beta$ is a partition into distinct odd parts. For the proof, we will use this correspondence. 
			If we take a pair of partitions $(\mu,\beta)$, then during the construction of $\lambda$, each part of $\beta$ increases the largest part of $\mu$ by $1$. Hence, when the whole procedure is done, the largest part of $\mu$ is increased by the number of parts in $\beta$. Because of that fact, we write the generating function of $\beta$ by keeping track of the number of parts, which gives $(-zq;q^2)_\infty$.
			
			The partition $\mu$ is an ordinary partition and the generating function of ordinary partitions such that the largest part is $M$ is given by 
			\begin{align*}
				\frac{q^M}{(1-q)\ldots(1-q^M)}.
			\end{align*}
			If we take sum over all $M$ by keeping track of the largest  parts with the exponent of $z$, we get
			\begin{align*}
				\sum_{M\geq0}\frac{z^Mq^M}{(1-q)\ldots(1-q^M)}=\sum_{M\geq0}\frac{(zq)^M}{(q;q)_M}=\frac{1}{(zq;q)_\infty}.
			\end{align*}
			The second identity follows from Euler's identity~\cite{TheBlueBook}. There is a one-to-one correspondence 
			between the partitions with exactly $k$ parts 
			and the partitions with largest part equals to $k$ via conjugation \cite{TheBlueBook}. 
			Thus, the latter generating function can also be considered as the generating function of ordinary partitions,	where the exponent of $z$ keeps track of the number of parts. Finally, since $\mu$ and $\beta$ are two independent partitions, we get the desired generating function. 
			
		\end{proof}
	\end{theorem}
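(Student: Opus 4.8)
The plan is to reuse wholesale the bijection $\lambda \leftrightarrow (\mu,\beta)$ established in the proof of Theorem~\ref{Fc(1,q) when c=(1,1)}, and simply track one extra statistic through it, namely $\max(\lambda)$. The only new content is to figure out how $\max(\lambda)$ is expressed in terms of data attached to the pair $(\mu,\beta)$, and then to recognize the resulting two-variable sum as the claimed product. So first I would recall that in the backward direction of that bijection, one starts from $\widetilde\lambda$, whose positive parts are exactly the parts of $\mu$, and then processes the parts of $\beta$ from smallest to largest; each such part, when processed, adds $1$ to a block of parts that always includes the top-left entry of the array. Consequently every part of $\beta$ bumps the current largest part by exactly $1$, and since $\mu$'s largest part sits at the top-left corner throughout, after all parts of $\beta$ are processed we get
\begin{align}
\nonumber
\max(\lambda) = \max(\mu) + \ell(\beta),
\end{align}
where $\ell(\beta)$ denotes the number of parts of $\beta$. (One should double-check the boundary cases — $\mu$ empty, or $\beta$ empty — but these are exactly the cases already discussed in the proof of Theorem~\ref{Fc(1,q) when c=(1,1)}, and the formula holds vacuously or trivially.)

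Next I would assemble the generating function. Because $\mu$ and $\beta$ are independent, and $|\lambda| = |\mu| + |\beta|$ while $\max(\lambda) = \max(\mu) + \ell(\beta)$, the variable $z$ splits as a factor $z^{\max(\mu)}$ on the $\mu$-side and a factor $z^{\ell(\beta)}$ on the $\beta$-side:
\begin{align}
\nonumber
F_c(z,q) = \left( \sum_{\mu} z^{\max(\mu)} q^{|\mu|} \right)\left( \sum_{\beta} z^{\ell(\beta)} q^{|\beta|} \right).
\end{align}
For the $\beta$-sum, $\beta$ runs over partitions into distinct odd parts, so grouping by which odd numbers appear gives $\prod_{j\ge 1}(1 + z q^{2j-1}) = (-zq;q^2)_\infty$. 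For the $\mu$-sum, summing over ordinary partitions with largest part exactly $M$ gives $q^M/(q;q)_M$, so
\begin{align}
\nonumber
\sum_{\mu} z^{\max(\mu)} q^{|\mu|} = \sum_{M\ge 0} \frac{(zq)^M}{(q;q)_M} = \frac{1}{(zq;q)_\infty}
\end{align}
by Euler's identity~\cite{TheBlueBook}. Multiplying the two factors yields $F_c(z,q) = (-zq;q^2)_\infty / (zq;q)_\infty$, as desired. One may equivalently phrase the $\mu$-side via conjugation, so that $z$ tracks the number of parts rather than the largest part; I would mention this since it makes the Euler-identity step more transparent.

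The main obstacle — really the only place where care is needed — is verifying the identity $\max(\lambda) = \max(\mu) + \ell(\beta)$, i.e. confirming that \emph{every} part of $\beta$ raises the running maximum by exactly one and never by zero or more than one. The "at most one" half is immediate since each backward step adds $1$ to each affected part. The "at least one" half requires knowing that the top-left corner of the array is always among the parts incremented; this follows from the description of the backward move, where the incremented set is $\{a_1,\ldots,\widetilde a_{j-1}, b_1,\ldots,\widetilde b_j\}$ together with the entry being swapped, a set that always contains $b_1$ (and $b_1$ is never strictly smaller than the top-left entry $a_1$ of the first row, since the array is a cylindric partition with the extra property $\widetilde b_i \ge \widetilde a_i$ — indeed $b_1 \ge a_1$ throughout). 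Once this is pinned down, the rest is the routine product manipulation above.
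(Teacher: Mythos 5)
Your proposal is correct and follows essentially the same route as the paper: reuse the bijection $\lambda \leftrightarrow (\mu,\beta)$ from Theorem~\ref{Fc(1,q) when c=(1,1)}, observe that each part of $\beta$ raises the largest part by exactly $1$ so that $\max(\lambda)=\max(\mu)+\ell(\beta)$, and multiply $(-zq;q^2)_\infty$ (tracking the number of parts of $\beta$) by $1/(zq;q)_\infty$ (tracking the largest part of $\mu$ via Euler's identity), which is precisely the paper's computation, only with more detail on the $\max$ bookkeeping. One small correction to your justification: the claim $b_1 \geq a_1$ does \emph{not} persist throughout the backward process (after the step corresponding to the part $1$ of $\beta$, the swapped top-left entry of the first row exceeds $b_1$), but the conclusion is unaffected, since that step, if it occurs, is performed first, when $\widetilde{b}_1 \geq \widetilde{a}_1$ still holds, while every later step with $j \geq 2$ increments both $a_1$ and $b_1$, so $\max(a_1,b_1)$ still grows by exactly $1$ per part of $\beta$.
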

	\begin{theorem} \label{Fc(1,q) when c=(2,0)}
		Let $c=(2,0)$. Then the generating function of cylindric partitions with profile $c$ is given by 
		
		\begin{equation*} 
			F_c(1,q) = \frac{(-q^2;q^2)_\infty}{(q; q)_\infty}.
		\end{equation*}
	\end{theorem}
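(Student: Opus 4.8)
The plan is to mirror the combinatorial bijection used for the profile $c=(1,1)$ case, adapting it to profile $c=(2,0)$. First I would write a cylindric partition $\lambda$ with profile $c=(2,0)$ in its two-row array form: because $c_2 = 0$ the second row is not shifted relative to the first, while $c_1 = 2$ means the repeated first row underneath is shifted two boxes to the left. Allowing trailing zeros, I would align the two rows so that $\lambda$ looks like a top row $(a_1, a_2, \ldots, a_r)$ over a bottom row $(b_1, b_2, \ldots, b_r)$, with the inequalities $a_j \ge a_{j+1}$, $b_j \ge b_{j+1}$, $b_j \ge a_j$ (from the unshifted second row forcing the column inequality in the direction bottom-over-nothing — one must check the correct direction here from Definition~\ref{def:cylin}), and the cross inequalities coming from the shift-by-$2$ in the wrap-around, namely $a_{j} \ge b_{j+2}$ type relations. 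The upshot is that the "defect" one must repair to turn $\lambda$ into a pair $(\mu,\beta)$ is different from the $(1,1)$ case: here the profile shift is $2$ rather than $1$, so each repair move should subtract $1$ from an \emph{even} number of parts, producing a part of $\beta$ that is \emph{even}, consistent with the target product $(-q^2;q^2)_\infty / (q;q)_\infty$ where now $\beta$ is a partition into distinct \emph{even} parts.

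Next I would carry out the forward map exactly as in the proof of Theorem~\ref{Fc(1,q) when c=(1,1)}: process the pairs $[b_j, a_j]$ (or whatever the correct pairing is for the $(2,0)$ alignment) from right to left; when the local inequality needed for the eventual listing of $\mu$ already holds, relabel and move on; when it fails, switch the offending pair, subtract $1$ from a prefix block of parts, and record the size of that block as a new (smallest-so-far, hence distinct) part of $\beta$. The bookkeeping to check is: (i) each intermediate array remains a valid cylindric partition of profile $(2,0)$ — this requires re-verifying the cross inequalities $a_{j-1}\ge\cdots$, $b_{j-1}\ge\cdots$ against the shifted parts $\widetilde a_{j+1}, \widetilde a_{j+2}, \widetilde b_{j+1}, \widetilde b_{j+2}$, using the hypothesis $a_j > b_j$ and integrality exactly as before; (ii) the number of parts subtracted at step $j$ is a fixed value depending only on $j$ and has the right parity (even), so $\beta$ lands in the set of partitions into distinct even parts; (iii) weight is preserved, i.e. $|\lambda| = |\mu| + |\beta|$. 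Then, as in the earlier proof, I would describe the inverse map — given $(\mu,\beta)$ with $\mu$ unrestricted and $\beta$ into distinct even parts, list $\mu$'s parts into the two-row array, then for the smallest part of $\beta$ add $1$ to the appropriate prefix block, switch the relevant pair back, relabel, and iterate until $\beta$ is empty — and observe that the two maps are step-by-step inverse, giving the bijection. Finally the generating function identity follows:
\begin{align}
\nonumber
F_c(1,q) = \sum_{\lambda} q^{|\lambda|} = \sum_{(\mu,\beta)} q^{|\mu| + |\beta|} = \left(\sum_\mu q^{|\mu|}\right)\left(\sum_\beta q^{|\beta|}\right) = \frac{1}{(q;q)_\infty}\,(-q^2;q^2)_\infty,
\end{align}
since unrestricted partitions have generating function $1/(q;q)_\infty$ and partitions into distinct even parts have generating function $(-q^2;q^2)_\infty$.

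The main obstacle I anticipate is getting the geometry of the $c=(2,0)$ array exactly right and then pinning down precisely which local inequality is the "defect" to be repaired and exactly how large the subtracted prefix block is at each step. Because the wrap-around shift is now $2$, the column and cross-diagonal inequalities couple parts that are two columns apart rather than adjacent, so the highlighted-parts verification (the analogue of the chains $a_{j-1} - 1 \ge b_j$, $a_j - 1 \ge \widetilde b_{j+1}$, etc.) will involve one more layer of parts and must be done carefully to confirm the intermediate arrays stay cylindric of the correct profile. A secondary subtlety, again parallel to the earlier proof, is the handling of trailing zeros and the degenerate cases (empty $\mu$, empty $\beta$, or a part of $\beta$ exceeding the current length), which must be addressed so that $r$ is fixed at the outset and the correspondence is genuinely well-defined in both directions. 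Once the correct move is identified, the parity statement — that each recorded part of $\beta$ is even and the recorded parts strictly decrease as $j$ decreases — should fall out immediately, and the rest is the same weight-preservation and reversibility argument as in Theorem~\ref{Fc(1,q) when c=(1,1)}.
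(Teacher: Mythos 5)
Your overall strategy coincides with the paper's: process pairs from right to left, repair a defective pair by a switch plus subtracting $1$ from a prefix block, record the block size as a new smallest part of $\beta$, check weight preservation, and invert; the identity $F_c(1,q)=(-q^2;q^2)_\infty/(q;q)_\infty$ then follows exactly as you write. However, the one piece of content that is genuinely specific to the profile $(2,0)$ is missing, and your stated setup would fail as written. For $c=(2,0)$, Definition~\ref{def:cylin} gives $\lambda^{(1)}_j \geq \lambda^{(2)}_j$ and $\lambda^{(2)}_j \geq \lambda^{(1)}_{j+2}$, so the two rows cannot be aligned into equal-length columns whose internal inequality is the thing to be repaired: in your labeling the in-pair inequality $b_j \geq a_j$ is already forced by the cylindric condition, so your procedure would never perform a move and would produce $1/(q;q)_\infty$ alone, while the genuinely unforced comparisons ($a_j$ versus $b_{j+1}$ in your labels) straddle two of your pairs, so \emph{switch the offending pair} is not a well-defined move there.

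The paper resolves this by writing the longer row as $a_0, a_1, \ldots, a_r$ and the shorter one as $b_1, \ldots, b_r$, leaving the absolute largest part $a_0$ \emph{unpaired} and pairing $[b_j, a_j]$; then $a_{j-1}\geq b_j$, $b_{j-1}\geq a_j$, $a_j\geq b_{j+1}$, $b_j\geq a_{j+1}$ are all forced, the only unforced comparison is $b_j$ versus $a_j$, and the repair for $a_j > b_j$ switches them and subtracts $1$ from $a_0, \ldots, a_j, b_1, \ldots, b_{j-1}$, i.e.\ from exactly $2j$ parts. It is the inclusion of the unpaired $a_0$ in every subtraction block, not the heuristic \emph{the shift is $2$ so the block is even}, that makes each recorded part of $\beta$ even and strictly decreasing; correspondingly $\mu$ is read off with an odd number of entries $(\widetilde a_0, \widetilde b_1, \widetilde a_1, \ldots, \widetilde b_s, \widetilde a_s)$ (pad with a zero if needed), and the inverse map adds $1$ to the same $2j$-part prefix before switching back. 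Until you pin down this pairing, the role of the extra largest part, and the exact size $2j$ of the subtracted block, the parity of $\beta$'s parts and the well-definedness of the bijection — the heart of the adaptation from Theorem~\ref{Fc(1,q) when c=(1,1)} — remain unproved, as you yourself anticipate in your closing paragraph.
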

	
	\begin{proof}
		The proof is very similar to the proof of Theorem \ref{Fc(1,q) when c=(1,1)}. 
		We read the parts of the cylindric partition

		\begin{align}
			\nonumber
			\lambda = \begin{array}{ccc ccc ccc}
				& & a_0 & a_1 & a_2 & \ldots & a_{r-1} & a_r \\
				& &b_1 & b_2  & \ldots & b_{r-1} & b_{r} \\ 
				\textcolor{lightgray}{a_0} & \textcolor{lightgray}{a_1} & \textcolor{lightgray}{a_2} & \textcolor{lightgray}{a_3} 
				& \textcolor{lightgray}{\ldots} & \textcolor{lightgray}{a_r} & 
			\end{array}.  
		\end{align}
		
		as the pairs: $[b_1,a_1], [b_2,a_2], [b_3,a_3], \ldots, [b_r,a_r]$. We note that the largest part of the cylindric partition $\lambda$, namely, $a_0$ is not contained in any pairs. We consider it as a single part.	
		
		$a_0$ is not switched with any part, 
		but it is increased or decreased accordingly 
		when we construct or incorporate $\beta$ 
		as in the proof of Theorem \ref{Fc(1,q) when c=(1,1)}.  
		Thus, $\beta$ consists of distinct even parts, 
		as opposed to distinct odd parts.  
	\end{proof}
	
	If we construct the generating function of cylindric partitions with profile $c=(2,0)$ by using \eqref{BorodinProd}, we get 
	\begin{equation*} 
		F_c(1,q) = \frac{1}{(q;q)_\infty(q^2;q^4)_\infty}.
	\end{equation*}
	
	If we compare that generating function with the generating function in Theorem \ref{Fc(1,q) when c=(2,0)}, we see that they are equal. Both generating functions have the factor $(q;q)_\infty$ in the denominators. If we cancel that factor, we should check whether 
	
	\begin{equation} \label{Borodin-check}
		\frac{1}{(q^2;q^4)_\infty}=(-q^2;q^2)_\infty
	\end{equation}
	
	or not. This identity holds due to the beautiful identity of Euler which states that the number of partitions of a non-negative integer $n$ into odd parts is equal to the number of partitions of $n$ into distinct parts. To obtain \eqref{Borodin-check}, we make the substitution $q^2 \rightarrow q$ in Euler's identity. 
	
	\section{Cylindric partitions into distinct parts}
	
	If all parts in a cylindric partition with profile $c=(1,1)$ are distinct, 
	then the inequalities between parts are strict.  
	Given such a partition, if we label the parts in the top row as $a_1, a_2, \ldots$, 
	and the parts in the bottom row as $b_1, b_2, \ldots$, 
	\begin{align}
		\label{ptnC11CylDistPartGeneric}
		\begin{array}{ccc ccc ccc}
			&  & a_1 & a_2 & \cdots & a_{r-1} & a_r & \cdots & a_n \\
			& b_1 & b_2 & \cdots & b_{r-1} & b_r & \cdots & b_n & \\
			\textcolor{lightgray}{a_1} & \textcolor{lightgray}{a_2} & \textcolor{lightgray}\cdots 
			& \textcolor{lightgray}{a_{r-1}} & \textcolor{lightgray}{a_r} 
			& \textcolor{lightgray}\cdots & \textcolor{lightgray}{a_n} & & 
		\end{array}, 
	\end{align}
	we have the inequalities 
	\begin{align}
		\nonumber
		a_r > a_{r+1}, 
		\qquad 
		b_r > b_{r+1}, 
		\qquad 
		a_r > b_{r+1}, 
		\qquad 
		\textrm{ and }
		\qquad 
		b_r > a_{r+1}
	\end{align}
	for $r = 1, 2, \ldots, n-1$.  
	In particular,
	\begin{align}
		\label{ineqDistC11}
		\mathrm{min}\{ a_r, b_r \} > \mathrm{max}\{ a_{r+1}, b_{r+1} \}
	\end{align}
	for $r = 1, 2, \ldots, n-1$.  
	As in the proof of Theorem \ref{Fc(1,q) when c=(1,1)}, 
	we lose no generality by assuming that the top row and the bottom row have equal number of parts.  
	We achieve this by allowing one of $a_n$ or $b_n$ to be zero.  
	The inequality \eqref{ineqDistC11} ensures that we can switch the places of $a_r$ and $b_r$ 
	without violating the condition for cylindric partition with profile $c=(1, 1)$ 
	for $r = 1, 2, \ldots, n$.  
	There are $2^n$ ways to do this.  
	
	Therefore, given a cylindric partition into $2n$ distinct parts with profile $c=(1, 1)$, 
	we can switch places of $a_r$ and $b_r$ to make $b_r > a_r$ for $r = 1, 2, \ldots, n$ so that 
	\begin{align}
		\label{ineqDistC11aug}
		b_1 > a_1 > b_2 > a_2 > \cdots > b_n > a_n, 
	\end{align}
	where $a_n$ is possibly zero.  
	In other words, we obtain a partition into $2n$ distinct parts 
	in which the smallest part is allowed to be zero.  
	
	Conversely, if we start with a partition into $2n$ distinct parts 
	in which the smallest part can be zero, 
	we can label the parts as in \eqref{ineqDistC11aug}, 
	then place them as in \eqref{ptnC11CylDistPartGeneric}, 
	and allow switching places of $a_r$ and $b_r$ for $r = 1, 2, \ldots, n$; 
	then we will have generated a cylindric partition into $2n$ distinct parts with profile $c=(1,1)$, 
	where one of the parts is allowed to be zero.  
	
	It is clear that any such cylindric partition corresponds to a unique 
	partition into an even number of distinct parts, 
	and any partition into $2n$ distinct parts gives rise to $2^n$ cylindric partitions.  
	We have almost proved the following lemma.  
	
	\begin{lemma}
		\label{lemmaGenFuncCylPtnC11Dist}
		Let $d_{(1,1)}(m, n)$ denote the number of cylindric partitions of $n$ 
		into distinct parts with profile $c = (1, 1)$ and the largest part equal to $m$.  
		Then, 
		\begin{align}
			\nonumber 
			D_{(1,1)}(t, q) 
			= \sum_{n, m \geq 0} d_{(1,1)}(m, n) t^m q^n 
			= \sum_{n \geq 0} \frac{ q^{\binom{2n}{2}} t^{2n-1} 2^n }{ (tq; q)_{2n} }.  
		\end{align}
	\end{lemma}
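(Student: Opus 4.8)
The plan is to sort the cylindric partitions under consideration by their number of parts and to reduce the count, for each fixed number of parts, to an elementary generating-function identity. Most of the bijective work is already done. By the discussion immediately preceding the lemma, a cylindric partition into distinct parts with profile $c=(1,1)$ has two rows differing in length by at most one; after appending a single trailing zero so that both rows have a common length $n$, and after choosing --- reversibly, and in $2^n$ ways --- which of $a_r$ and $b_r$ is the larger for each $r=1,\dots,n$, such a partition becomes the same data as a sign vector $\varepsilon\in\{0,1\}^{n}$ together with a strictly decreasing sequence $b_1>a_1>b_2>a_2>\cdots>b_n>a_n\ge 0$ of $2n$ integers. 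The first step is to record that, under this correspondence, the size of the cylindric partition is $a_1+b_1+\cdots+a_n+b_n$ and its largest part is $b_1$, and that neither quantity depends on $\varepsilon$ (a switch $a_r\leftrightarrow b_r$ only permutes the multiset of parts, and never affects $b_1=\max\{a_1,b_1\}$). Hence the total contribution to $D_{(1,1)}(t,q)$ of the cylindric partitions having exactly $2n$ parts, one of them possibly zero, is $2^{n}S_n(t,q)$, where
\begin{align*}
	S_n(t,q)=\sum_{b_1>a_1>\cdots>b_n>a_n\ge 0} t^{\,b_1}\,q^{\,a_1+b_1+\cdots+a_n+b_n}.
\end{align*}

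Next I would evaluate $S_n$ by stripping off the staircase. Relabel $(b_1,a_1,\dots,b_n,a_n)$ as $(c_1,c_2,\dots,c_{2n})$ with $c_1>c_2>\cdots>c_{2n}\ge 0$, and set $c_i=(2n-i)+d_i$; then the strict descent of the $c_i$ is equivalent to $d_1\ge d_2\ge\cdots\ge d_{2n}\ge 0$, while $c_1=(2n-1)+d_1$ and $\sum_i c_i=\binom{2n}{2}+\sum_i d_i$. Thus
\begin{align*}
	S_n(t,q)=q^{\binom{2n}{2}}\,t^{\,2n-1}\sum_{d_1\ge d_2\ge\cdots\ge d_{2n}\ge 0} t^{\,d_1}\,q^{\,d_1+d_2+\cdots+d_{2n}}.
\end{align*}
To finish, use the substitution $e_j=d_j-d_{j+1}$ for $1\le j\le 2n-1$ and $e_{2n}=d_{2n}$, which is a bijection from weakly decreasing tuples $(d_j)$ with $d_{2n}\ge 0$ onto arbitrary tuples $(e_1,\dots,e_{2n})$ of nonnegative integers. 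Since $d_i=\sum_{j\ge i}e_j$, one has $d_1=\sum_{j}e_j$ and $\sum_i d_i=\sum_{j=1}^{2n}j\,e_j$, so the remaining sum factors completely:
\begin{align*}
	\sum_{d_1\ge\cdots\ge d_{2n}\ge 0} t^{\,d_1}\,q^{\,d_1+\cdots+d_{2n}}
	=\prod_{j=1}^{2n}\;\sum_{e_j\ge 0}\bigl(tq^{\,j}\bigr)^{e_j}
	=\prod_{j=1}^{2n}\frac{1}{1-tq^{\,j}}
	=\frac{1}{(tq;q)_{2n}}.
\end{align*}
Therefore $S_n(t,q)=q^{\binom{2n}{2}}t^{\,2n-1}/(tq;q)_{2n}$, and summing over $n$ (with the $n=0$ summand recording the empty cylindric partition) yields the asserted identity.

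The generating-function computation is routine; the step needing care is the bookkeeping in the first paragraph --- verifying that letting $n$ and $\varepsilon$ range counts every cylindric partition into distinct parts exactly once, and that equalizing the two row lengths to $n$ by appending a trailing zero creates no duplication between consecutive values of $n$. Both points follow from the two facts recalled just before the lemma: the two rows of a profile-$c=(1,1)$ cylindric partition differ in length by at most one, so the number of its positive parts is $2n$ or $2n-1$ for a unique $n$; and the profile $(1,1)$ imposes no inequality relating $a_r$ and $b_r$, which is precisely what legitimizes all $2^n$ switches. I would present these verifications first and then carry out the computation above.
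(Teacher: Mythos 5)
Your proposal is correct and takes essentially the same route as the paper: both arguments rest on the $2^n$-switch correspondence from the discussion preceding the lemma and then compute the generating function for partitions into $2n$ distinct parts (smallest part allowed to be zero) weighted by largest part, your explicit staircase-plus-telescoping-differences computation being simply a more detailed rendering of the paper's remark that the minimal staircase gives $q^{\binom{2n}{2}}t^{2n-1}$ and each factor $(1-tq^j)$ accounts for augmenting the $j$ largest parts. The only loose phrase, that the $n=0$ summand records the empty partition (it is actually $t^{-1}$), is an artifact inherited from the formula as stated in the lemma and not a defect of your argument.
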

	\begin{proof}
		We build the proof on the discussion preceding the statement of the lemma.  
		A partition into $2n$ distinct parts is generated by 
		\begin{align}
			\nonumber 
			\frac{q^{\binom{2n}{2}}}{ (q; q)_{2n} }, 
		\end{align}
		where the smallest part is allowed to be zero.  
		When we want to keep track of the largest part, 
		we start by the minimal partition into $2n$ distinct parts 
		\begin{align}
			\nonumber 
			(2n-1), (2n-2), \ldots, 1, 0, 
		\end{align} 
		hence the $t^{2n-1}$ in the numerator in the rightmost sum in the Lemma.  
		Then, for $j = 1, 2, \ldots, 2n$, 
		the factor $(1 - tq^j)$ in the denominator contributes to the 
		$j$ largest parts in the partition into distinct parts.  
	\end{proof}
	
	A similar discussion ensues for cylindric partitions into distinct parts 
	with profile $c=(2, 0)$.  
	The generic cylindric partition is 
	\begin{align}
		\label{ptnC20CylDistPartGeneric}
		\begin{array}{ccc ccc ccc c}
			& &  a_0 & a_1 & a_2 & \cdots & a_{r-1} & a_r & \cdots & a_n \\
			& & b_1 & b_2 & \cdots & b_{r-1} & b_r & \cdots & b_n & \\
			\textcolor{lightgray}{a_0} & \textcolor{lightgray}{a_1} & \textcolor{lightgray}{a_2} 
			& \textcolor{lightgray}\cdots 
			& \textcolor{lightgray}{a_{r-1}} & \textcolor{lightgray}{a_r} 
			& \textcolor{lightgray}\cdots & \textcolor{lightgray}{a_n} & & 
		\end{array}, 
	\end{align}
	where the top row contains $n+1$ parts, the bottom row contains $n$ parts, 
	and one of $a_n$ or $b_n$ is allowed to be zero.  
	We still have the inequalities \eqref{ineqDistC11}.  
	In addition, $a_0$ is the absolute largest part.  
	Needless to say that $a_0$ is zero if and only if we have the empty cylindric partition.  
	$a_r$ and $b_r$ can switch places for $r = 1, 2, \ldots, n$ to obtain 
	the augmented chain of inequalities
	\begin{align}
		\nonumber
		a_0 > b_1 > a_1 > b_2 > a_2 > \cdots > b_n > a_n, 
	\end{align}
	to get a partition into $(2n+1)$ distinct parts, 
	where the smallest part is allowed to be zero.  
	Conversely, any partition into $(2n+1)$ distinct parts 
	in which the smallest part is allowed to be zero 
	gives rise to exactly $2^n$ cylindric partitions into distinct parts with profile $c = (2, 0)$.  
	We have again almost proved the following lemma.  
	\begin{lemma}
		\label{lemmaGenFuncCylPtnC20Dist}
		Let $d_{(2,0)}(m, n)$ denote the number of cylindric partitions of $n$ 
		into distinct parts with profile $c = (2, 0)$ and the largest part equal to $m$.  
		Then, 
		\begin{align}
			\nonumber 
			D_{(2,0)}(t, q) 
			= \sum_{n, m \geq 0} d_{(2,0)}(m, n) t^m q^n 
			= \sum_{n \geq 0} \frac{ q^{\binom{2n+1}{2}} t^{2n} 2^n }{ (tq; q)_{2n+1} }.  
		\end{align}
	\end{lemma}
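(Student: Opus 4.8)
The plan is to mirror the proof of Lemma~\ref{lemmaGenFuncCylPtnC11Dist}, the only structural differences being that a cylindric partition into distinct parts with profile $c=(2,0)$ unpacks into a partition into an \emph{odd} number of distinct parts, and that its absolute largest entry $a_0$ sits outside the switchable pairs. First I would invoke the bijective discussion preceding the statement: a cylindric partition into distinct parts with profile $c=(2,0)$ and $2n+1$ total parts corresponds, after rearranging each of the $n$ pairs $[b_r,a_r]$ so that $b_r>a_r$, to the strictly decreasing chain $a_0>b_1>a_1>\cdots>b_n>a_n$, i.e.\ to a partition into $2n+1$ distinct parts with the smallest part allowed to be zero; and each such partition is the image of exactly $2^n$ cylindric partitions, one for each choice of orientation of the $n$ switchable pairs. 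This accounts both for the factor $2^n$ and for the outer summation over $n\ge 0$.

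Next I would compute the bivariate generating function $\sum t^{\text{(largest part)}}q^{\text{(size)}}$ over partitions into $2n+1$ distinct parts with smallest part allowed to be zero. The minimal such partition is the staircase $(2n,2n-1,\dots,1,0)$, which has size $0+1+\cdots+2n=\binom{2n+1}{2}$ and largest part $2n$, producing the factor $q^{\binom{2n+1}{2}}t^{2n}$ in the numerator. Every other such partition is obtained from the staircase by moves where, for each $j\in\{1,2,\dots,2n+1\}$, one adds $1$ to each of the $j$ largest parts any number of times; a single such move raises the size by $j$ and the largest part by $1$, so the moves for a fixed $j$ are enumerated by $(1-tq^j)^{-1}$, and altogether by $1/(tq;q)_{2n+1}$. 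Equivalently, one may subtract the staircase to land on an ordinary partition with at most $2n+1$ parts and use conjugation to convert ``largest part'' into ``number of parts,'' exactly as in the proof of the $F_c(z,q)$ formula for $c=(1,1)$ (cf.~\cite{TheBlueBook}). Multiplying the three contributions and summing over $n\ge 0$ then yields
\begin{align}
    \nonumber
    D_{(2,0)}(t,q)=\sum_{n\ge 0}\frac{q^{\binom{2n+1}{2}}t^{2n}2^n}{(tq;q)_{2n+1}},
\end{align}
as claimed.

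There is essentially no hard step here; the argument is a transcription of the $c=(1,1)$ case with $2n$ replaced by $2n+1$. The one place to be careful with the bookkeeping is the exponent of $t$ in the numerator: because the smallest part of the staircase is $0$ rather than $1$, its largest part is $2n$, not $2n+1$, which is precisely what makes the numerator $t^{2n}$ (matching the $t^{2n-1}$ of Lemma~\ref{lemmaGenFuncCylPtnC11Dist} under $2n\mapsto 2n+1$, again shifted down by one). A second point worth stating explicitly is that the power of $2$ is $2^n$ and not $2^{n+1}$: the entry $a_0$ is the unique absolute maximum of the cylindric partition and is never switched, so only the $n$ pairs $[b_r,a_r]$ with $r=1,\dots,n$ contribute orientation choices.
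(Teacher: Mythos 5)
Your proposal is correct and follows exactly the route the paper intends: the paper skips this proof, stating it is "almost the same" as that of Lemma \ref{lemmaGenFuncCylPtnC11Dist}, and your argument is precisely that adaptation — the $2^n$ from the $n$ switchable pairs (with $a_0$ fixed), the staircase $(2n,2n-1,\ldots,1,0)$ giving $q^{\binom{2n+1}{2}}t^{2n}$, and the factors $1/(1-tq^j)$ for $j=1,\ldots,2n+1$ from adding $1$ to the $j$ largest parts. Your explicit remarks on why the exponent of $t$ is $2n$ and why the power of $2$ is $2^n$ rather than $2^{n+1}$ are exactly the right points of care.
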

	The proof is almost the same as that of Lemma \ref{lemmaGenFuncCylPtnC11Dist}, 
	and is skipped.  
	
	Next, we dissect one of Euler's $q$-series identities~\cite{TheBlueBook}
	\begin{align}
		\nonumber
		\sum_{n \geq 0} \frac{ q^{\binom{n}{2}} a^n }{ (q; q)_n } = (-a; q)_\infty
	\end{align}
	to separate odd and even powers of $a$.  
	\begin{align}
		\nonumber
		\sum_{n \geq 0} \frac{ q^{\binom{2n}{2}} (a^2)^n }{ (q; q)_{2n} } 
		= \frac{ (-a; q)_\infty + (a; q)_\infty }{2}, 
		\qquad 
		a \sum_{n \geq 0} \frac{ q^{\binom{2n+1}{2}} (a^2)^n }{ (q; q)_{2n+1} } 
		= \frac{ (-a; q)_\infty - (a; q)_\infty }{2}.  
	\end{align}
	If we plug in $t = 1$ in Lemmas \ref{lemmaGenFuncCylPtnC11Dist} and \ref{lemmaGenFuncCylPtnC20Dist}, 
	and $a = \sqrt{2}$ in the above formulas, we obtain the following theorem.  
	We repeat the descriptions of the partition enumerants for ease of reference.  
	\begin{theorem}
		\label{thmGenFuncCylPtnDistC11C20}
		Let $d_{(1,1)}(m, n)$ and $d_{(2, 0)}(m, n)$ 
		be the number of cylindric partitions of $n$ into distinct parts 
		where the largest part is $m$
		with profiles $c=(1,1)$ and $c=(2,0)$, respectively.  
		Let 
		\begin{align}
			\nonumber
			D_{(1, 1)}(t, q) = \sum_{m, n \geq 0} d_{(1,1)}(m, n) t^m q^n, 
			\quad \textrm{ and } \quad 
			D_{(2, 0)}(t, q) = \sum_{m, n \geq 0} d_{(2,0)}(m, n) t^m q^n 
		\end{align}
		be the respective generating functions.  Then, 
		\begin{align}
			\nonumber 
			D_{(1,1)}(1, q) & = \frac{ (-\sqrt{2}; q)_\infty + ( \sqrt{2}; q)_\infty }{ 2 }, \\ 
			\nonumber 
			D_{(2,0)}(1, q) & = \frac{ (-\sqrt{2}; q)_\infty - ( \sqrt{2}; q)_\infty }{ 2 \sqrt{2} }, \\ 
			\nonumber 
			D_{(1,1)}(1, q) + \sqrt{2} \; D_{(2,0)}(1, q) & = (-\sqrt{2}; q)_\infty.  
		\end{align}
	\end{theorem}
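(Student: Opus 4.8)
The plan is to derive all three identities directly from Lemmas~\ref{lemmaGenFuncCylPtnC11Dist} and~\ref{lemmaGenFuncCylPtnC20Dist} together with the even/odd dissection of Euler's $q$-series identity recorded immediately before the statement; the genuine combinatorial content is already contained in those two lemmas, and what is left is a short formal manipulation of $q$-series.

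First I would specialize $t = 1$ in the two lemmas. In Lemma~\ref{lemmaGenFuncCylPtnC11Dist} the factor $t^{2n-1}$ becomes $1$ for every $n$ (the $n = 0$ term being the empty cylindric partition, of weight $0$), so that
\[
D_{(1,1)}(1, q) = \sum_{n \geq 0} \frac{ q^{\binom{2n}{2}}\, 2^n }{ (q; q)_{2n} },
\]
and likewise
\[
D_{(2,0)}(1, q) = \sum_{n \geq 0} \frac{ q^{\binom{2n+1}{2}}\, 2^n }{ (q; q)_{2n+1} }.
\]

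Next I would match these two series against the dissected Euler identities. Choosing $a$ with $a^2 = 2$, i.e.\ $a = \sqrt{2}$, turns each $(a^2)^n$ into $2^n$; the even-index half of the dissection then reads $D_{(1,1)}(1, q) = \big( (-\sqrt{2}; q)_\infty + (\sqrt{2}; q)_\infty \big)/2$, and the odd-index half reads $\sqrt{2}\, D_{(2,0)}(1, q) = \big( (-\sqrt{2}; q)_\infty - (\sqrt{2}; q)_\infty \big)/2$, whence $D_{(2,0)}(1, q) = \big( (-\sqrt{2}; q)_\infty - (\sqrt{2}; q)_\infty \big)/(2\sqrt{2})$. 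Adding the first of these to $\sqrt{2}$ times the second cancels the term $(\sqrt{2}; q)_\infty$ and leaves $(-\sqrt{2}; q)_\infty$, which is the third claimed identity.

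The only point that calls for a word of justification is the substitution $a = \sqrt{2}$. Euler's identity and its dissection are identities in $\mathbb{Q}[a][[q]]$, the coefficient of each power of $q$ on both sides being a polynomial in $a$; moreover the even-index series involves only $a^2$ and the odd-index series is $a$ times a series in $a^2$, so specializing $a = \sqrt{2}$ produces honest elements of $\mathbb{Z}[\sqrt{2}][[q]]$ on both sides. I do not expect a real obstacle here; the step most deserving of care is simply the bookkeeping that $\binom{2n}{2}$, $\binom{2n+1}{2}$, the powers of $2$, and the $q$-Pochhammer lengths appearing in the two lemmas line up precisely with the two halves of the dissected identity.
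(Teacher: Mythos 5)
Your proposal is correct and follows exactly the paper's own route: specialize $t=1$ in Lemmas \ref{lemmaGenFuncCylPtnC11Dist} and \ref{lemmaGenFuncCylPtnC20Dist}, then set $a=\sqrt{2}$ in the even/odd dissection of Euler's identity and add the two resulting formulas to get the third identity. The only difference is that you spell out the specialization-of-$a$ justification, which the paper leaves implicit.
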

	
	\section{Discussion}
	
	In Theorem \ref{Fc(1,q) when c=(1,1)}, to construct the desired generating function for cylindric partitions with profile $c=(1,1)$, we decompose each  cylindric partition $\lambda$ with profile $c=(1,1)$ into a pair of partitions $(\mu,\beta)$, where $\mu$ is an ordinary partition and $\beta$ is a partition with distinct odd parts. Conversely, if a pair $(\mu,\beta)$ is given, then we find a unique cylindric partition $\lambda$ with profile $c=(1,1)$. In that way, we find a one-to-one correspondence between $\lambda$'s and $(\mu,\beta)$'s. Here, the partitions $\mu$ and $\beta$ are two independent partitions. In the following theorem, we consider the pair of partitions $(\mu,\beta)$ such that 
	$\mu$ and $\beta$ are dependent partitions and we construct a family of cylindric partitions with profile $c=(1,1)$ corresponding to the pairs $(\mu,\beta)$.
	
	\begin{theorem}
		Let $O_c(1,q)$ be the generating function of cylindric partitions with profile $c=(1,1)$ such that all parts are odd. Then,
		\begin{align} \label{O_c(1,q)}
			O_c(1,q)=&\sum_{k \geq 0}\frac{q^{2k}}{(q^2;q^2)_{2k}}.(-q^2;q^4)_{k}+\sum_{k \geq 0}\frac{q^{2k+1}}{(q^2;q^2)_{2k+1}}.(-q^2;q^4)_{k+1} \\
			=&\sum_{k \geq 0}\frac{q^{2k}(-q^2;q^4)_{k}(1+q-q^{4k+2}+q^{4k+3})}{(q^2;q^2)_{2k+1}} \nonumber.
		\end{align}
	\end{theorem}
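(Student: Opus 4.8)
The plan is to adapt the decomposition behind Theorem~\ref{Fc(1,q) when c=(1,1)} to the case in which every part of the cylindric partition is odd. Write a cylindric partition $\lambda$ with profile $c=(1,1)$ into odd parts with a top row $a_1\ge a_2\ge\cdots$ and bottom row $b_1\ge b_2\ge\cdots$, so that the defining inequalities become $\min\{a_j,b_j\}\ge\max\{a_{j+1},b_{j+1}\}$ for every $j$, the two row lengths differing by at most one. First I would run the same right-to-left moves used in Theorem~\ref{Fc(1,q) when c=(1,1)}: whenever $a_j>b_j$, swap $a_j$ and $b_j$, subtract $1$ from the $2j-1$ parts lying weakly above and to its left, and record $2j-1$ as a part of $\beta$. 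This yields a normalized cylindric partition $\widetilde\lambda$ with $\widetilde b_j\ge\widetilde a_j$ and the ordinary partition $\mu=(\widetilde b_1,\widetilde a_1,\widetilde b_2,\widetilde a_2,\dots)$. The new feature is that, since every part of $\lambda$ was odd, the parity pattern of the parts of $\widetilde\lambda$ — hence of $\mu$ — is now entirely determined by $\beta$: a position turns even exactly when an odd number of the recorded values $2j-1$ were subtracted from it. This is the ``dependence'' between $\mu$ and $\beta$ referred to in the discussion; one then checks that this restricted correspondence is a bijection, i.e. that every such constrained $(\mu,\beta)$ rebuilds an all-odd cylindric partition and that no all-odd cylindric partition is omitted or repeated.

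Having set this up, I would enumerate by two indices: $k$, the number of parts of $\beta$ (the number of flipped columns), and whether $\lambda$ has an even or an odd number of parts, say $2k$ or $2k+1$. If $\lambda$ has $2k$ parts the two rows have equal length; the underlying weakly decreasing chain of $2k$ positive odd parts contributes $q^{2k}/(q^2;q^2)_{2k}$, the $q^{2k}$ for the minimal all-ones configuration and $1/(q^2;q^2)_{2k}$ because the allowed even increments of these $2k$ parts, recorded along the chain, range over all partitions into at most $2k$ parts (with $q$ replaced by $q^2$). The choices of $\beta$, whose distinct parts are confined to the first $k$ odd numbers, together with the orientations of the strict columns, assemble into $(-q^2;q^4)_k=\prod_{i=1}^{k}\bigl(1+q^{2(2i-1)}\bigr)$; since $2(2i-1)$ is twice an odd number, the distinct odd parts of $\beta$ reappear here, doubled. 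Summing over $k$ produces the first sum. If $\lambda$ has $2k+1$ parts the rows differ in length by one, there is an extra terminal half-column carrying its own binary choice, and the same accounting yields $q^{2k+1}(-q^2;q^4)_{k+1}/(q^2;q^2)_{2k+1}$; summing over $k$ produces the second sum. Adding the two sums gives the first displayed form of $O_c(1,q)$.

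For the second displayed form it is enough to put the two sums over the common denominator $(q^2;q^2)_{2k+1}$. Using
\[
\frac{1}{(q^2;q^2)_{2k}}=\frac{1-q^{4k+2}}{(q^2;q^2)_{2k+1}},\qquad
(-q^2;q^4)_{k+1}=\bigl(1+q^{4k+2}\bigr)\,(-q^2;q^4)_k,
\]
the $k$-th summand becomes
\[
\frac{q^{2k}(-q^2;q^4)_k\bigl[(1-q^{4k+2})+q\,(1+q^{4k+2})\bigr]}{(q^2;q^2)_{2k+1}}
=\frac{q^{2k}(-q^2;q^4)_k\,(1+q-q^{4k+2}+q^{4k+3})}{(q^2;q^2)_{2k+1}},
\]
which is exactly the second expression; this step is routine.

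The hard part is the first half: checking that the moves of Theorem~\ref{Fc(1,q) when c=(1,1)} really do cut down to a bijection between the all-odd cylindric partitions and the constrained pairs $(\mu,\beta)$ — in particular that the parity pattern of $\mu$ is forced by $\beta$ and nothing is lost, and that trailing zeros and the terminal half-column are treated consistently — and then converting that bijection into the exact factors $q^{2k}/(q^2;q^2)_{2k}$ and $(-q^2;q^4)_k$ and their $2k+1$ analogues. Concretely this amounts to the auxiliary $q$-series identity that a weakly decreasing string of $2k$ positive odd integers, weighted by $2$ to the number of indices $i$ for which its $(2i-1)$-st entry strictly exceeds its $(2i)$-th, has generating function $q^{2k}(-q^2;q^4)_k/(q^2;q^2)_{2k}$ (together with its odd-length companion). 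Once that is established, what remains is the bookkeeping described above and the algebraic step.
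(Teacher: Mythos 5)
Your plan splits into two halves that do not fit together. In the first paragraph you run the subtract-$1$ moves of Theorem \ref{Fc(1,q) when c=(1,1)}, so the bookkeeping is $\vert\lambda\vert=\vert\mu\vert+\vert\beta\vert$ with $\beta$ a partition into distinct odd parts (not doubled) and with $\mu$ no longer a partition into odd parts (its parity pattern depends on $\beta$). In the second paragraph, however, you charge $q^{2k}/(q^2;q^2)_{2k}$ for a chain of $2k$ \emph{odd} parts and $(-q^2;q^4)_k=\prod_{i=1}^{k}(1+q^{2(2i-1)})$ for parts of $\beta$ ``doubled''. Those factors belong to a different correspondence --- the one the paper actually uses: subtract $2$ (not $1$) from each affected part, so that parity is preserved, $\mu$ stays odd, each part $2j-1$ of $\beta$ is counted twice, and (crucially) the largest part of $\beta$ is constrained by the number of parts of $\mu$ ($\leq 2k-1$ when $\mu$ has $2k$ parts, $\leq 2k+1$ when it has $2k+1$). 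Also $k$ cannot simultaneously be ``the number of parts of $\beta$'' and half the number of parts of $\lambda$; the number of flipped columns is in general smaller than the number of columns, and it is the latter that indexes the sums.

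The genuine gap is the ``auxiliary identity'' you defer, and specifically its odd-length companion. The even-length statement is fine: if column $i$ is marked then $c_{2i-1}\geq c_{2i}+2\geq 3$, so one may subtract $2$ from the first $2i-1$ entries and record $2(2i-1)$, which gives $q^{2k}(-q^2;q^4)_k/(q^2;q^2)_{2k}$ and matches the first sum. But for odd length your accounting --- ``an extra terminal half-column carrying its own binary choice'' --- is a weight-preserving choice and therefore produces a factor $2$, i.e.\ $2\,q^{2k+1}(-q^2;q^4)_k/(q^2;q^2)_{2k+1}$, not the factor $(1+q^{4k+2})$ needed to turn $(-q^2;q^4)_k$ into $(-q^2;q^4)_{k+1}$; the two already disagree at $q^{1}$ (there are two one-part all-odd cylindric partitions, $((1),())$ and $((),(1))$, while the stated sum contributes $q$, not $2q$) and at $q^{3}$ (the direct count is $4$, the stated sum gives $3$, the configuration top $(1,1)$, bottom $(1)$ being the one not reached). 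To get $(1+q^{4k+2})$ one must treat the flip of the last half-column as the paper's weight-shifting move, subtracting $2$ from all $2k+1$ entries, and that move is impossible when small parts (equal to $1$) are present. So the odd-length identity your route needs is not routine bookkeeping --- as a statement about the naive count of all-odd cylindric partitions it fails --- and your plan cannot be completed as written; the paper's proof instead rests entirely on the subtract-$2$ correspondence with the stated dependence between $\mu$ and $\beta$, and this is exactly the point that deserves scrutiny against small cases. Your final algebraic step combining the two sums over the common denominator $(q^2;q^2)_{2k+1}$ is correct.
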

	
	\begin{proof}
		We will construct a one-to-one correspondence between the cylindric partitions with profile $c=(1,1)$ such that all parts are odd and the pairs of partitions $(\mu,\beta)$, where $\mu$ is a partition into odd parts,  $\beta$ is a partition into distinct odd parts such that each part is counted twice. Moreover, $\mu$ and $\beta$ are dependent on each other with respect to the number of parts in $\mu$ and the largest odd part in $\beta$ as follows:
		
		\begin{enumerate}[(a)]
			\item if $\mu$ has $2k$ parts,  the largest odd part in $\beta$ is $2k-1$, 
			\item if $\mu$ has $2k+1$ parts,  the largest odd part in $\beta$ is $2k+1$.
		\end{enumerate}
		
		By using exactly the same construction in the proof of Theorem \ref{Fc(1,q) when c=(1,1)}, whenever a pair of partitions $(\mu,\beta)$ as in case $(a)$ or $(b)$, we may construct a unique cylindric partition $\lambda$ with profile $c=(1,1)$ such that all parts are odd. The only change in the construction is that we increase/decrease the weight of each part in the intermediate cylindric partition by $2$ instead of $1$. It is clear that the parts of the cylindric partition have to be odd, since all parts of the partition $\mu$ are odd and the parts of $\beta$ are counted twice, i.e., we do not change the parity of the parts during the transformations. Conversely, if we are given a cylindric partition $\lambda$ with profile $c=(1,1)$ such that all parts are odd, then we may find a unique pair of partitions $(\mu,\beta)$ just described as above. The first term in the sum in \eqref{O_c(1,q)} is the generating function of pairs $(\mu,\beta)$ having the property $(a)$ and the latter term in the summation is the generating function of pairs $(\mu,\beta)$ having the property $(b)$.
	\end{proof}
	
	A natural question is to ask if similar constructions to the proof of 
	Theorems \ref{Fc(1,q) when c=(1,1)} and \ref{Fc(1,q) when c=(2,0)} 
	could be done for cylindric partitions with larger profiles.  
	Another natural question is to ask if similar infinite product generating functions
	to Theorem \ref{thmGenFuncCylPtnDistC11C20}
	could be discovered for cylindric partitions with larger profiles into distinct parts.

\section*{Acknowledgements}

The authors are indebted to the anonymous referee for careful scrutinization of the paper, 
for helpful suggestions to improve the exposition, and for pointing out~\cite{BU}.  
The authors also thank Ole Warnaar for notifying them of 
the connections between Section \ref{secGenFuncsUnrestricted} and~\cite{Warnaar}.  
	
	\bibliographystyle{amsplain}
	
\end{document}